\documentclass[11pt]{article}
\usepackage[a4paper,margin=1.05in]{geometry}
\usepackage{amsmath,amssymb,amsthm,mathtools}
\usepackage{microtype}
\usepackage{enumitem}
\usepackage{booktabs}
\usepackage{xcolor}
\usepackage{hyperref}
\usepackage[nameinlink,noabbrev]{cleveref}
\usepackage{mathrsfs}
\usepackage{bm}
\usepackage{array}

\hypersetup{
  colorlinks=true,
  linkcolor=blue!55!black,
  citecolor=blue!55!black,
  urlcolor=blue!55!black,
  pdftitle={Contraction Algebras, Cyclotomic Forms, and Reflection Rigidity},
  pdfauthor={Xiaobin Li}
}

\newtheorem{theorem}{Theorem}[section]
\newtheorem{proposition}[theorem]{Proposition}
\newtheorem{lemma}[theorem]{Lemma}
\newtheorem{corollary}[theorem]{Corollary}
\theoremstyle{definition}
\newtheorem{definition}[theorem]{Definition}
\newtheorem{assumption}[theorem]{Assumption}
\theoremstyle{remark}
\newtheorem{remark}[theorem]{Remark}

\newcommand{\supp}{\operatorname{Supp}}
\newcommand{\ord}{\operatorname{ord}}
\newcommand{\Span}{\operatorname{span}}
\newcommand{\Tr}{\operatorname{tr}}

\newcommand{\con}{\mathrm{con}}
\newcommand{\ab}{\mathrm{ab}}
\newcommand{\wt}{\mathsf W}
\newcommand{\acoeff}{\mathsf A}
\newcommand{\C}{\mathbb C}
\newcommand{\R}{\mathbb R}
\newcommand{\Z}{\mathbb Z}

\newcommand{\Q}{\mathbb Q}
\newcommand{\dd}{\,\mathrm d}

\title{\bfseries Contraction Algebras, Cyclotomic Forms, and Reflection Rigidity}
\author{Xiaobin Li\\[2pt]
\small School of Mathematics, Southwest Jiaotong University,\\ \small
West Zone, High-Tech District, Chengdu, Sichuan 611756, China}
\date{}

\begin{document}
\maketitle

\begin{abstract}
We associate to finite block size data
\[
\bm n=(n_d)_{d\ge1},\qquad n_1>0,
\]
a positive definite cyclotomic form and a Householder reflection representation.  Writing
\[
N=\operatorname{lcm}\{d:n_d\ne0\},\qquad \wt=\sum_d d^2n_d,
\]
the form on the cyclic group $C_N$ is
\[
B_{\bm n}(g,h)=\frac1{\wt}\sum_{(g^{-1}h)^d=1}d^2n_d.
\]
Its labelled Fourier spectrum reconstructs the complete multiplicity sequence by M\"obius inversion.  If the $n_d$ are the Wedderburn multiplicities of a split semisimple algebra $S=\prod_d M_d(k)^{n_d}$, the divisibility widths are dimensions of characteristic sums of Wedderburn blocks, and the form together with $\dim_kS$ recovers the isomorphism type of $S$.

The main theorem is an arithmetic rigidity result.  For $N\ge2$, the reflection group is finite if and only if it preserves a full rank lattice, equivalently
\[
\supp(\bm n)=\{1,N\},\qquad n_1=N^2n_N.
\]
In this case $B_{\bm n}=\frac12(I+J)$ and the group is $W(A_N)\cong S_{N+1}$.  In semisimple algebra terms, finiteness is equivalent to
\[
S\cong k^{N^2m}\times M_N(k)^m
\]
for some $m\ge1$.  For $N=2$, every nonfinite case is a dense copy of $D_\infty$ in the corresponding orthogonal group.

For contraction algebras of threefold flops, Toda's width formulas identify the total scale and primitive Fourier mode with $\dim A_{\con}$ and $\dim A_{\con}^{\ab}$.  This yields infinitude for all irreducible flops of length at least three and, in length two,
\[
\Gamma_f\text{ finite}\quad\Longleftrightarrow\quad
\dim A_{\con}=2\dim A_{\con}^{\ab}.
\]
The final part gives a complementary BPS/Riemann--Hilbert realization of the same reflection matrices.
\end{abstract}

\medskip
\noindent\textbf{Keywords.} contraction algebras, semisimple algebras, reflection groups, cyclotomic forms, Gopakumar-Vafa invariants, threefold flops.\\
\textbf{2020 Mathematics Subject Classification.} Primary 16S38; Secondary 20F55, 14E30, 14N35, 16K20.

\section{Introduction and main results}

Finite dimensional algebras arising from threefold flops provide a natural setting in which block size data, divisor arithmetic, and reflection theory interact.  Contraction algebras, introduced by Donovan and Wemyss, are finite dimensional noncommutative invariants of three dimensional flopping contractions \cite{DWDuke}.  Their dimensions and abelianizations are tied to local genus zero Gopakumar-Vafa invariants through Toda's width formulas \cite{TodaWidth}, while recent semisimple deformation results identify the relevant Gopakumar-Vafa multiplicities with Wedderburn block multiplicities in types $A$ and $D$ \cite{JS2026}.  This motivates asking whether the block-size data themselves carry a canonical algebraic structure beyond their numerical role.

The purpose of this paper is to construct such a structure.  From a finite multiplicity sequence $\bm n=(n_d)$ we form characteristic divisibility widths, a positive definite cyclic Gram form, and a Householder reflection representation.  The principal result is an arithmetic rigidity theorem: finiteness of the reflection group is equivalent to preservation of a full rank lattice and forces a unique $A_N$-type block pattern.  For a split semisimple algebra $S=\prod_dM_d(k)^{n_d}$, the divisibility widths are dimensions of characteristic two-sided ideals formed from Wedderburn blocks; the resulting pointed cyclotomic form, together with $\dim_kS$, recovers the complete Wedderburn type.  Thus the construction is intrinsic at the level of split semisimple algebras and becomes a natural shadow of contraction algebra data.

The construction is deliberately coarser than the full contraction algebra.  Brown and Wemyss exhibited distinct flops with identical Gopakumar-Vafa spectra but different contraction algebras \cite{BrownWemyss}; such flops necessarily have the same cyclotomic form.  The new invariant should therefore be viewed as the part of the contraction algebra structure visible through its spectrum of matrix-block multiplicities.  Conversely, once the total width is supplied, the labelled Fourier spectrum reconstructs all multiplicities by M\"obius inversion.

There are existing reflection and braid group structures attached to flops; see \cite{Humphreys} for general reflection group background.  Donovan and Wemyss constructed hyperplane arrangement and braid group actions from sequences of threefold flops \cite{DWBraids}, and related categorical viewpoints were developed further in \cite{BodzentaBondal}.  Coxeter symmetries acting on K\"ahler and curve classes were studied in \cite{LukasRuehle,KuuselaMcGovern}.  The present construction has a different source: the reflecting vectors and their positive form are reconstructed directly from finite block size/Gopakumar-Vafa data rather than from geometric flop transformations.

The final part of the paper gives a complementary analytic realization.  Exact chargewise BPS difference identities lead to a hyperbolic interaction whose associated one-cut singular integral equation vectorizes to a cyclotomic Riemann-Hilbert problem with the same homogeneous reflection matrices.  We compare this linear problem with Bridgeland's Donaldson-Thomas Riemann-Hilbert theory \cite{BridgelandDT,BridgelandConifold}; direct tangent equivalence is obstructed, but each reflection factors as a unipotent tangent shear followed by a rank one folding involution.  These analytic results are logically independent of the algebraic rigidity theorem.  Related thermodynamic and difference equation motivations appear in \cite{LiYagi2021,LiDiffEq2026}.

The main contributions are therefore algebraic: a cyclotomic positive form attached to Wedderburn multiplicities, reconstruction of the block size data from the labelled spectrum, and a finite/lattice rigidity classification of the associated reflection group.  The Fourier diagonalization itself is standard for circulant matrices \cite{DavisCirculant,SoIntegral}; the new content is the algebraic source of the form and the rigidity forced by its divisibility structure.

\subsection*{The cyclotomic metric shadow}
Let
\[
\bm n=(n_d)_{d\ge1},\qquad n_d\in\Z_{\ge0},\qquad n_1>0,
\]
be finitely supported.  Put
\[
N=\operatorname{lcm}\{d:n_d>0\},\qquad
\wt(\bm n)=\sum_d d^2n_d.
\]
For the cyclic group $C_N$, define
\begin{equation}\label{eq:intro-kernel}
B_{\bm n}(g,h)=\frac1{\wt(\bm n)}\sum_{(g^{-1}h)^d=1}d^2n_d.
\end{equation}
Our first result is that this is positive definite and therefore defines a canonical cyclic metric representation.  The diagonalization of a circulant kernel by cyclic characters is standard (see, for example, \cite{DavisCirculant,SoIntegral}); the point here is the specific arithmetic kernel forced by the spectrum and the rigidity consequences below.  Its Fourier spectrum is
\begin{equation}\label{eq:intro-fourier}
\lambda_k=\frac1{\wt(\bm n)}\sum_{d\mid\gcd(k,N)}d^3n_d,
\end{equation}
and the divisor labelled spectrum recovers the $n_d$ by M\"obius inversion.

\begin{theorem}[Metric and spectral reconstruction]\label{thm:intro-metric}
The kernel \eqref{eq:intro-kernel} is positive definite.  The corresponding cyclic metric shadow is canonical up to pointed $C_N$-equivariant isometry.  Its Fourier eigenvalues are given by \eqref{eq:intro-fourier}.  If $\Lambda_r$ denotes the eigenvalue attached to the gcd-class $r\mid N$, then
\[
n_d=\frac{\wt(\bm n)}{d^3}\sum_{e\mid d}\mu(d/e)\Lambda_e.
\]
Consequently the labelled Fourier spectrum together with $\wt(\bm n)$ determines the complete spectrum $\bm n$; the normalized Gram form alone determines its projective class.
\end{theorem}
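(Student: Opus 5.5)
The plan is to treat $B_{\bm n}$ as a circulant kernel and diagonalize it with the characters of $C_N$. Identify $C_N=\Z/N$ and write $B_{\bm n}(g,h)=f(h-g)$ with
\[
f(x)=\frac1{\wt}\sum_{d:\,dx\equiv0\ (N)}d^2n_d .
\]
Only $d$ with $n_d>0$ contribute, and each such $d$ divides $N$. For these $d$, the condition $dx\equiv 0 \pmod N$ says exactly that $x$ lies in the subgroup $H_d=(N/d)\Z/N$ of order $d$. So
\[
f=\frac1{\wt}\sum_d d^2n_d\,\mathbf 1_{H_d}.
\]
The eigenvalue attached to the character $\chi_k(x)=e^{2\pi i kx/N}$ is $\lambda_k=\sum_x f(x)\chi_k(x)$. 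We have $\sum_{x\in H_d}\chi_k(x)=d$ if $\chi_k$ is trivial on $H_d$ and $0$ otherwise. Triviality on $H_d$ means $N\mid k\cdot N/d$, i.e.\ $d\mid k$. Combined with $d\mid N$, this is $d\mid\gcd(k,N)$. This gives \eqref{eq:intro-fourier}. In particular $\lambda_k\ge n_1/\wt>0$, because the $d=1$ term always appears. Hence every eigenvalue is strictly positive and the kernel is positive definite.

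Canonicity comes next. Take the Gram factorization $B=V^\ast V$, for example $V=B^{1/2}$, and set $v_g=V e_g$. These are vectors in $\R^N$ with $\langle v_g,v_h\rangle=B(g,h)$. Translation invariance of $B$ means the shift $v_g\mapsto v_{g+1}$ preserves all inner products. Since the $v_g$ form a basis (by positive definiteness), this shift extends to an orthogonal map, which gives a $C_N$-equivariant isometric realization. For uniqueness, suppose two pointed realizations $(v_g)$, $(w_g)$ have the same Gram matrix. The linear map $v_g\mapsto w_g$ is then an isometry between their spans; it is automatically equivariant and sends base point to base point. I would spell this out as the standard Gram-matrix uniqueness lemma.

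For the reconstruction, note that $\lambda_k$ depends only on $r=\gcd(k,N)$. Set
\[
\Lambda_r=\frac1{\wt}\sum_{d\mid r}d^3n_d .
\]
Put $F(r)=\wt\,\Lambda_r$ and $G(d)=d^3n_d$, so that $F(r)=\sum_{d\mid r}G(d)$ for every $r\mid N$. Möbius inversion over the divisor lattice of $N$ gives $G(d)=\sum_{e\mid d}\mu(d/e)F(e)$, which is the displayed formula. Every $d$ with $n_d>0$ divides $N$, so all of $\bm n$ is recovered, with $n_d=0$ for $d\nmid N$.

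For the final sentence: $\Lambda$ is homogeneous of degree zero in $\bm n$. So the normalized form alone determines the vector $(d^3n_d\wt^{-1})$, i.e.\ $\bm n/\wt$, which fixes $\bm n$ up to positive scaling; supplying $\wt$ removes the scale. The main obstacle is not computational. It is making ``canonical up to pointed equivariant isometry'' precise: one must fix that a cyclic metric shadow means a pointed $C_N$-orbit spanning a Euclidean space with prescribed Gram matrix. I would state that definition explicitly before applying the Gram uniqueness lemma.
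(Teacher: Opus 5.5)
Your proposal is correct. The character-sum computation of $\lambda_k$, the Gram-matrix (GNS) uniqueness argument, and the M\"obius inversion are exactly those of Theorem~\ref{thm:fourier}, Proposition~\ref{prop:GNS} and Theorem~\ref{thm:reconstruct}.

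The one genuine difference is where positive definiteness comes from. You read it off the spectrum, since the $d=1$ term contributes $n_1/\wt$ to every $\lambda_k$. The paper proves it before any Fourier analysis. It realizes $B_{\bm n}$ as the Gram matrix of the $C_N$-orbit of the unit vector $\wt^{-1/2}\bigoplus_d d\sqrt{n_d}\,\delta_{H_d}$ in the permutation module $\bigoplus_{d\mid N}\R[C_N/H_d]$. There the regular summand ($d=1$) gives $B_{\bm n}\ge (n_1/\wt)I_N$.

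Your route is more economical: one computation gives both the spectrum and the same sharp bound $\lambda_{\min}=n_1/\wt$. The paper's route explains structurally why the kernel is of positive type, as a nonnegative combination of the positive-type functions $\mathbf 1_{H_d}$ (matrix coefficients of permutation representations). It also gives a concrete model of the shadow without diagonalizing anything.

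Your argument for the final sentence is also fine: the normalized form fixes each $n_d/\wt$, and rescaling $\bm n$ leaves $B_{\bm n}$ unchanged. This supplies what the paper leaves implicit in Corollary~\ref{cor:projective}.
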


If the multiplicities $n_d$ are the Wedderburn multiplicities of a split finite-dimensional semisimple algebra
\[
S\cong\prod_d M_d(k)^{n_d},
\]
then the construction is an isomorphism invariant of $S$.  The divisibility widths are dimensions of characteristic sums of Wedderburn blocks, and the pointed metric shadow together with $\dim_k S$ recovers the complete isomorphism type of $S$.  Thus the numerical construction has a purely algebraic interpretation even before contraction algebras are introduced.

The extra factor of $d$ in \eqref{eq:intro-fourier} has a simple representation theoretic meaning: $d^2$ is the matrix block weight naturally appearing in contraction algebra width formulas, while $d=|C_d|$ is the order of the relevant cyclotomic stabilizer.

\subsection*{Reflection rigidity}
Let $v_g$ be the unit orbit vectors of the cyclotomic metric shadow and define
\[
R_g(x)=x-2\langle x,v_g\rangle v_g,
\qquad
\Gamma_{\bm n}=\langle R_g:g\in C_N\rangle.
\]
The principal rigidity theorem is the following.

\begin{theorem}[Cyclotomic reflection rigidity]\label{thm:intro-rigidity}
Assume $N\ge2$.  The following conditions are equivalent:
\begin{enumerate}[label=(\roman*)]
\item $\Gamma_{\bm n}$ is finite;
\item $\Gamma_{\bm n}$ preserves a full-rank lattice;
\item $\supp(\bm n)=\{1,N\}$ and $n_1=N^2n_N$;
\item the Gram matrix is $B_{\bm n}=\frac12(I_N+J_N)$;
\item $\Gamma_{\bm n}\cong W(A_N)\cong S_{N+1}$.
\end{enumerate}
Equivalently, finiteness is characterized spectrally by
\[
\lambda_0=\frac{N+1}{2},\qquad \lambda_k=\frac12\quad(1\le k<N).
\]
Thus the finite case is precisely exact equipartition of the distinguished vector among all nontrivial Fourier characters.
\end{theorem}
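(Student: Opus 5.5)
The plan is to turn both (i) and (ii) into an arithmetic constraint on the off-diagonal Gram entries, and then solve that constraint combinatorially. Since the kernel depends only on $g^{-1}h$, and $(g^{-1}h)^d=1$ iff $\ord(g^{-1}h)\mid d$, every off-diagonal entry has the form
\[
c_r=\frac1{\wt}\sum_{r\mid d}d^2n_d,\qquad r=\ord(g^{-1}h)>1,\ r\mid N,
\]
while the diagonal entries are $c_1=1$. By Theorem~\ref{thm:intro-metric}, $B_{\bm n}$ is positive definite, so $0\le c_r<1$ for $r>1$, and each $c_r$ is rational.

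For a pair $g\ne h$ with $c=c_r$, the product $R_gR_h$ acts on the plane spanned by $v_g,v_h$ as a rotation by $2\theta$, where $\cos\theta=c$, and trivially on the orthogonal complement.
\begin{itemize}
\item If $\Gamma_{\bm n}$ is finite, the rotation has finite order. Then $2\cos2\theta=4c^2-2$ is an algebraic integer and also rational, hence an integer.
\item If $\Gamma_{\bm n}$ preserves a full-rank lattice, $R_gR_h$ is an integral matrix in a lattice basis. Its trace $N-4+4c^2$ is then an integer.
\end{itemize}
Either way $4c^2\in\{0,1,2,3\}$. Since $c$ is rational, this leaves $c_r\in\{0,\tfrac12\}$ for every $r\mid N$ with $r>1$. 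In particular $c_d=\tfrac12$ for every $d\in\supp(\bm n)$ with $d>1$, because $c_d\ge d^2n_d/\wt>0$.

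The combinatorial step is to show that these constraints force (iii); I expect this to be the main obstacle.
\begin{itemize}
\item Let $d>1$ be a minimal nontrivial element of the support, and suppose $e\in\supp(\bm n)$ with $d\mid e$ and $e\ne d$. Then
\[
c_d-c_e=\frac1{\wt}\sum_{d\mid x,\ e\nmid x}x^2n_x\ge \frac{d^2n_d}{\wt}>0,
\]
which contradicts $c_d=c_e=\tfrac12$. Hence $c_d=d^2n_d/\wt=\tfrac12$ for every such minimal $d$.
\item Suppose there were two distinct minimal elements $d\ne d'$. Then
\[
\wt\ge n_1+d^2n_d+d'^2n_{d'}=n_1+\wt,
\]
contradicting $n_1>0$. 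So there is a unique minimal nontrivial element $d$.
\item By the first bullet, no other support element is a multiple of $d$. Every nontrivial support element lies above some minimal element, which can only be $d$. Hence $\supp(\bm n)=\{1,d\}$, and $\operatorname{lcm}$ gives $d=N$.
\item Finally, $\wt=n_1+N^2n_N=2N^2n_N$ gives $n_1=N^2n_N$.
\end{itemize}
Conversely, (iii) makes every $c_r$ with $r>1$ equal to $N^2n_N/\wt=\tfrac12$, which is exactly (iv). So (i)$\Rightarrow$(iii), (ii)$\Rightarrow$(iii), and (iii)$\Leftrightarrow$(iv).

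To close the cycle, I would realize the Gram matrix $\tfrac12(I_N+J_N)$ explicitly. Take the vectors $v_i=(e_i-e_0)/\sqrt2$ for $1\le i\le N$ in the sum-zero hyperplane of $\R^{N+1}$; their pairwise inner products are $\tfrac12$. By positive definiteness the Gram matrix determines the configuration up to isometry. Each $R_{v_i}$ is then the transposition $(0\,i)$, and these transpositions generate $S_{N+1}=W(A_N)$. This gives (iv)$\Rightarrow$(v). The implication (v)$\Rightarrow$(i) is trivial, and (iv)$\Rightarrow$(ii) holds because the $A_N$ root lattice $\Z\{v_i\}$ is preserved.

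For the spectral reformulation, the eigenvalues of $\tfrac12(I+J)$ are $\tfrac{N+1}2$ on the trivial character and $\tfrac12$ on the others. Conversely, by the Möbius reconstruction of Theorem~\ref{thm:intro-metric}, equivalently by Fourier inversion of a circulant, this spectrum determines the circulant $\tfrac12(I+J)$. So the spectral condition is equivalent to (iv).
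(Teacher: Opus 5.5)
Your proposal is correct and follows essentially the paper's route: the pairwise trace obstruction forces $c_r\in\{0,\tfrac12\}$ (trace $4c^2-2$ on the plane for finiteness, full trace $N-4+4c^2$ for the lattice), a support lemma then gives (iii), and the star-transposition realization of $\tfrac12(I+J)$ yields $W(A_N)$ together with its root lattice. The only difference is cosmetic: you run the support lemma from a minimal nontrivial support element using $c_d-c_e>0$. The paper instead starts from a maximal element $m$, where $w_m=m^2n_m$, and rules out intermediate divisors by $w_d\ge d^2n_d+N^2n_N>\wt/2$.
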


For $N=1$, the unique Householder reflection is multiplication by $-1$, so the reflection group is $W(A_1)\cong C_2$.  For $N=2$ the infinite case can also be classified dynamically.

\begin{theorem}[Rank two finite-or-dense dichotomy]\label{thm:intro-rank2}
Assume $N=2$ and $n_1,n_2>0$.  If $n_1=4n_2$, then $\Gamma_{\bm n}\cong W(A_2)\cong S_3$.  Otherwise $\Gamma_{\bm n}\cong D_\infty$, and its image is dense in the compact orthogonal group of the cyclotomic metric.
\end{theorem}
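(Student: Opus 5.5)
The plan is to reduce everything to the explicit $2\times2$ Gram matrix and to elementary facts about the group generated by two reflections in a Euclidean plane. For $N=2$ we have $C_2=\{e,g\}$. From \eqref{eq:intro-kernel} the diagonal entries are $B_{\bm n}(e,e)=B_{\bm n}(g,g)=1$, since every $d$ satisfies $e^d=1$. For the off-diagonal entry only $d=2$ satisfies $g^d=1$, so
\[
c:=B_{\bm n}(e,g)=\frac{4n_2}{n_1+4n_2}\in(0,1),
\]
using $n_1,n_2>0$. By Theorem~\ref{thm:intro-metric} the form is positive definite. Hence $v_e,v_g$ are two linearly independent unit vectors in a Euclidean plane $(V,\langle\,,\rangle)$, and the angle $\theta=\arccos c$ between them lies strictly between $0$ and $\pi/2$.

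The standard dihedral computation comes next. The reflections $R_e,R_g$ fix the lines $v_e^\perp$ and $v_g^\perp$, which meet at angle $\theta$. Therefore $\rho:=R_gR_e$ is the rotation of $V$ by $2\theta$, and $\Gamma_{\bm n}=\langle R_e,R_g\rangle$ is dihedral, generated by two involutions with product $\rho$. It follows that $\Gamma_{\bm n}$ is finite, of order $2m$, exactly when $\rho$ has finite order $m$, which happens iff $\theta\in\pi\Q$. Otherwise $\rho$ has infinite order, and the presentation $\langle s,t\mid s^2=t^2=1\rangle$ maps isomorphically onto $\Gamma_{\bm n}$. Indeed, every element of $D_\infty$ is either a nontrivial power of $st$, which maps to a nontrivial rotation, or a reflection, which has determinant $-1$. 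So in that case $\Gamma_{\bm n}\cong D_\infty$.

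The arithmetic input, and the only nonformal step, is deciding when $\theta\in\pi\Q$. Here $\cos\theta=c$ is rational. By Niven's theorem, the only rational values of $\cos$ at rational multiples of $\pi$ are $0,\pm\tfrac12,\pm1$; one can also argue that $2\cos\theta$ is an algebraic integer, so if rational it is an integer in $(0,2)$. Since $c\in(0,1)$, finiteness forces $c=\tfrac12$, which is equivalent to $8n_2=n_1+4n_2$, i.e.\ $n_1=4n_2$. Then $\theta=\pi/3$, $\rho$ has order $3$, and $\Gamma_{\bm n}$ is dihedral of order $6$, i.e.\ $W(A_2)\cong S_3$. This is consistent with Theorem~\ref{thm:intro-rigidity}(iii),(v), which one could alternatively cite for this direction.

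For the density claim in the case $n_1\ne4n_2$, we have $2\theta\notin\pi\Q$, so the cyclic group $\langle\rho\rangle$ is dense in $SO(V)\cong SO(2)\cong\R/2\pi\Z$ by Kronecker's theorem, because $2\theta/2\pi$ is irrational. Then $\Gamma_{\bm n}=\langle\rho\rangle\cup R_e\langle\rho\rangle$ is dense in $SO(V)\cup R_eSO(V)=O(V)$, which is the compact orthogonal group of the cyclotomic metric. I expect no real obstacle here. The only point needing care is invoking Niven's theorem, or the algebraic-integer argument, correctly to exclude all other rational values of $c$.
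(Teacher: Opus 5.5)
Your proposal is correct and follows essentially the paper's route. You reduce to the $2\times2$ Gram matrix with off-diagonal entry $\kappa=4n_2/(n_1+4n_2)\in(0,1)$ and use a rational-cosine/algebraic-integer argument to force $\kappa=\tfrac12$ in the finite case; the paper applies this argument to $\Tr(R_gR_h)=4\kappa^2-2=2\cos2\theta$ through its rational finite order lemma, while you apply Niven, or integrality of $2\cos\theta$, directly. Otherwise you obtain $D_\infty$ with dense image from the irrational rotation $R_gR_e$, as the paper does, and your determinant argument for injectivity of $D_\infty\to\Gamma_{\bm n}$ fills in a step the paper only asserts.
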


\subsection*{Threefold flops}
Let $f:X\to Y$ be a smooth irreducible threefold flop with exceptional curve $C\cong\mathbb P^1$, length $\ell$, and local genus zero Gopakumar-Vafa spectrum $n_j=n_{j[C]}$.  Toda proved \cite{TodaWidth}
\begin{equation}\label{eq:intro-toda}
\dim A_{\con}=\sum_{j=1}^{\ell}j^2n_j,
\qquad
\dim A_{\con}^{\ab}=n_1,
\end{equation}
and records $n_j\ge1$ for $1\le j\le\ell$.  Thus the cyclotomic order equals $\operatorname{lcm}(1,\ldots,\ell)$.

\begin{theorem}[Flop consequences]\label{thm:intro-flop}
Let $f$ be as above.
\begin{enumerate}[label=(\alph*)]
\item The primitive Fourier eigenvalue is
\[
\lambda_{\rm prim}=\frac{\dim A_{\con}^{\ab}}{\dim A_{\con}}.
\]
\item If $\ell=1$, then $\Gamma_f\cong W(A_1)$.
\item If $\ell\ge3$, then $\Gamma_f$ is infinite.
\item If $\ell=2$, then
\[
\Gamma_f\text{ is finite}
\quad\Longleftrightarrow\quad
\dim A_{\con}=2\dim A_{\con}^{\ab}.
\]
In the finite case $\Gamma_f\cong W(A_2)$; otherwise $\Gamma_f$ is a dense copy of $D_\infty$ in the corresponding two-dimensional orthogonal group.
\end{enumerate}
\end{theorem}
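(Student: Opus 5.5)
The plan is to reduce every part of Theorem~\ref{thm:intro-flop} to Toda's formulas \eqref{eq:intro-toda} combined with Theorems~\ref{thm:intro-metric}, \ref{thm:intro-rigidity} and \ref{thm:intro-rank2}, applied to the Gopakumar--Vafa spectrum $\bm n=(n_j)$. Since $n_j\ge1$ for $1\le j\le\ell$ and $n_j=0$ for $j>\ell$, we have $\supp(\bm n)=\{1,\dots,\ell\}$ and $N=\operatorname{lcm}(1,\dots,\ell)$. The first formula in \eqref{eq:intro-toda} then reads $\wt(\bm n)=\dim A_{\con}$.

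For (a), the primitive Fourier mode is a $k$ with $\gcd(k,N)=1$. By \eqref{eq:intro-fourier} only the divisor $d=1$ contributes, so
\[
\lambda_{\rm prim}=\frac{n_1}{\wt(\bm n)}=\frac{\dim A_{\con}^{\ab}}{\dim A_{\con}}.
\]

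For (b), $\ell=1$ gives $N=1$, and then there is a single Householder reflection, namely $-1$ on a line, so $\Gamma_f\cong W(A_1)$, as remarked after Theorem~\ref{thm:intro-rigidity}.

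For (c), suppose $\ell\ge3$. Then $\supp(\bm n)=\{1,\dots,\ell\}$ contains $1$, $2$ and $3$, so it has at least three elements. Condition (iii) of Theorem~\ref{thm:intro-rigidity} requires the support to be exactly $\{1,N\}$, which has two elements (note $N\ge6$, so $N\ge2$ and the theorem applies). Hence (iii) fails, and by (i)$\Leftrightarrow$(iii) the group $\Gamma_f$ is infinite. The only thing to check is that $\{1,\dots,\ell\}\neq\{1,N\}$, which is immediate since $2\in\supp$ and $2<N$ when $\ell\ge3$.

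For (d), take $\ell=2$, so $N=2$ with $n_1,n_2>0$, and Theorem~\ref{thm:intro-rank2} applies directly. It gives $\Gamma_f\cong W(A_2)$ exactly when $n_1=4n_2$, and otherwise a dense $D_\infty$ in the orthogonal group of the two-dimensional cyclotomic metric. It remains to translate the criterion. Here $\dim A_{\con}=n_1+4n_2$ and $\dim A_{\con}^{\ab}=n_1$. The condition $n_1=4n_2$ is equivalent to $n_1+4n_2=2n_1$, that is, $\dim A_{\con}=2\dim A_{\con}^{\ab}$.

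There is no genuine obstacle here. The only care needed is to confirm the standing input from \cite{TodaWidth} that all $n_j$ with $j\le\ell$ are nonzero. That input fixes both $N$ and the support, and parts (c) and (d) depend on it.
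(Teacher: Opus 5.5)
Your proposal is correct and follows the paper's own argument essentially step for step. Part (a) is the primitive eigenvalue $n_1/\wt$ of \Cref{cor:extrema} combined with Toda's formulas, and (b) is the $N=1$ remark. Part (c) is the observation that a support containing $1,2,3$ cannot equal $\{1,N_f\}$ as \Cref{thm:finite} requires. Part (d) is \Cref{thm:dense}, with $n_1=4n_2$ rewritten as $\dim A_{\con}=2\dim A_{\con}^{\ab}$ via $\dim A_{\con}=n_1+4n_2$.
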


Using the 2026 theorem of Jelisiejew-Smoktunowicz \cite{JS2026}, we obtain a complete type $D$ application: among the noncommutative $D_{n,m}$ contraction algebras covered by their semisimple deformation theorem, $D_{2,2}$ is the unique finite case, with spectrum $(n_1,n_2)=(4,1)$.

\subsection*{BPS and Riemann--Hilbert realization}
Motivated in part by the role of finite-difference structures in the Nekrasov-Okounkov random partition approach \cite{NekrasovOkounkov}, we associate to each charge $d$ the factor
\[
\Phi^+_{\hbar,d}(t)=n_d\sum_{m\ge1}m\log(1-e^{-m\hbar-dt}).
\]
It satisfies an exact finite-difference identity at step $\hbar/d$.  After pairing the two charge chambers through the same entire charge factor, the normalized second difference is precisely
\[
d^2n_d\log\!\left(2\sinh\frac{d|t|}{2}\right).
\]
The resulting interaction gives, under a regular one cut assumption, a nonlocal scalar singular integral equation.  On the cyclotomic cover it becomes a vector Riemann-Hilbert problem with homogeneous jumps equal to the reflections $R_g$ above.

We stress that we do not prove a large deviation theorem deriving the variational functional from a complete microscopic Donaldson-Thomas ensemble.  The exact difference identity and the associated variational/Riemann-Hilbert realization are the analytic results established here.

\subsection*{Relation with DT Riemann-Hilbert automorphisms}
Bridgeland's ray automorphisms have unipotent tangent maps on uncoupled BPS rays, hence determinant $+1$, whereas our homogeneous jumps are reflections of determinant $-1$.  Direct tangent conjugacy is therefore impossible.  Nevertheless each reflection admits a canonical factorization
\[
R_j=U_jD_j,
\]
where $D_j$ is a rank one folding involution and the canonically determined shear $U_j$ is realized as the tangent map of a finite auxiliary uncoupled BPS automorphism whose multiplicities are the cyclotomic divisibility widths.  We refer to this as a \emph{folded tangent BPS realization}.  The auxiliary BPS structure is not asserted to be the genuine geometric DT BPS structure of the original flop.

The paper is organized as follows:  Section 2 constructs the cyclotomic metric shadow. Section 3 develops its Fourier theory and reconstruction formulas. Section 4 proves the reflection and lattice rigidity results.  Section 5 applies the abstract theory to threefold flops and contraction algebras. Section 6 derives the exact BPS difference Hessian and the associated cyclotomic Riemann-Hilbert problem. Section 7 compares the reflection jumps with Donaldson-Thomas BPS automorphisms.

\section{Cyclotomic metric shadows}\label{sec:metric}

Let $\bm n=(n_d)_{d\ge1}$ be finitely supported with $n_d\in\Z_{\ge0}$ and $n_1>0$.  Put
\[
D(\bm n)=\{d\ge1:n_d>0\},\qquad
N=\operatorname{lcm}D(\bm n),\qquad
\wt=\sum_{d\mid N}d^2n_d.
\]

\subsection{Divisibility widths}
For each divisor $r\mid N$, define
\begin{equation}\label{eq:width-r}
w_r(\bm n)=\sum_{\substack{d\mid N\\r\mid d}}d^2n_d.
\end{equation}
In particular $w_1=\wt$.  Let $G_N=C_N$.  For $g\in G_N$, define
\begin{equation}\label{eq:kappa}
\kappa_{\bm n}(g)=\frac{w_{\ord(g)}(\bm n)}{\wt}
=\frac1\wt\sum_{g^d=1}d^2n_d.
\end{equation}
Clearly $\kappa_{\bm n}(e)=1$ and $\kappa_{\bm n}(g^{-1})=\kappa_{\bm n}(g)$.

\begin{proposition}[Positive-type property]\label{prop:positive}
The matrix
\[
B_{\bm n}(g,h)=\kappa_{\bm n}(g^{-1}h)
\]
is positive definite.
\end{proposition}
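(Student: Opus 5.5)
The plan is to diagonalize the circulant matrix $B_{\bm n}$ by the characters of $C_N$ and show every eigenvalue is strictly positive. Identify $C_N$ with $\Z/N\Z$, so that $B_{\bm n}(g,h)=\kappa_{\bm n}(h-g)$ is a circulant (convolution) kernel. The characters $\chi_k(x)=e^{2\pi i kx/N}$, $0\le k<N$, form an orthogonal eigenbasis, with eigenvalue
\[
\lambda_k=\sum_{x\in\Z/N}\kappa_{\bm n}(x)\,e^{-2\pi i kx/N}.
\]
Positive definiteness of the real symmetric matrix $B_{\bm n}$ is therefore equivalent to $\lambda_k>0$ for all $k$.

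To compute $\lambda_k$, I would expand $\kappa_{\bm n}$ as a nonnegative combination of indicator functions of subgroups:
\[
\kappa_{\bm n}=\frac1\wt\sum_{d\in D(\bm n)}d^2n_d\,\mathbf 1_{H_d},\qquad H_d=\{x: dx=0\}\cong C_d .
\]
Here $H_d$ is the unique subgroup of order $d$, which exists because $d\mid N$. The Fourier transform of the indicator of a subgroup $H$ equals $|H|$ on the annihilator of $H$ and $0$ elsewhere. For $H_d=(N/d)\Z/N\Z$, the character $\chi_k$ is trivial on $H_d$ exactly when $N\mid kN/d$, that is, when $d\mid k$, equivalently $d\mid\gcd(k,N)$. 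This gives
\[
\lambda_k=\frac1\wt\sum_{d\mid\gcd(k,N)}d^3n_d ,
\]
which is formula \eqref{eq:intro-fourier}.

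Every summand is nonnegative, and the $d=1$ term $n_1/\wt$ is always present and strictly positive because $n_1>0$. Hence $\lambda_k\ge n_1/\wt>0$ for all $k$, and $B_{\bm n}$ is positive definite. Equivalently, the quadratic form $\sum_{g,h}\bar c_g c_h\kappa_{\bm n}(g^{-1}h)$ equals $N\sum_k\lambda_k|\hat c(k)|^2$ up to normalization, which vanishes only when $c=0$.

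The only step that needs care is the annihilator computation, which determines which $k$ see which $d$. Strictness also deserves a remark: mere positive semidefiniteness follows from the general fact that subgroup indicators are of positive type, but definiteness genuinely uses the hypothesis $n_1>0$. Without it, $\lambda_k$ would vanish for $k$ coprime to $N$.
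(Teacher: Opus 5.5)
Your proof is correct, but it takes a different route from the paper's proof of this proposition.

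\textbf{The paper's route.} It argues by a Gram realization. It takes $\mathcal H_{\bm n}=\bigoplus_{d\mid N}\R[G_N/H_d]$ with the left permutation action and $v=\wt^{-1/2}\bigoplus_d d\sqrt{n_d}\,\delta_{H_d}$. It checks that $\langle\pi(g)v,\pi(h)v\rangle=\kappa_{\bm n}(g^{-1}h)$, so $B_{\bm n}$ is a Gram matrix. It then notes that the $d=1$ summand, which is the regular representation, contributes exactly $(n_1/\wt)I_N$, while the other summands are positive semidefinite. This gives $B_{\bm n}\ge (n_1/\wt)I_N$.

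\textbf{Your route.} You diagonalize the circulant and compute every eigenvalue. That is essentially the content of the paper's later \Cref{thm:fourier} and \Cref{cor:extrema}, which the paper proves independently.

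\textbf{What the two share.} Both arguments decompose $\kappa_{\bm n}$ as the same nonnegative combination of subgroup indicators $\mathbf 1_{H_d}$. Both get strictness from the $d=1$ term, using $n_1>0$.

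\textbf{What each buys.} The paper realizes each indicator as a matrix coefficient of a coset permutation module. That is basis-free and uses no characters, so it would work for nonabelian groups or arbitrary subgroup families. It also produces directly the concrete orthogonal model underlying the metric shadow. Your Fourier computation instead applies the Fourier transform to each indicator. It gives more quantitative information: the exact spectrum, and the fact that the bound $n_1/\wt$ is sharp, attained at the primitive characters. The cost is that it depends on the cyclic, abelian structure of $C_N$.
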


\begin{proof}
For each $d\mid N$, let $H_d\le G_N$ be the subgroup of order $d$.  Consider
\[
\mathcal H_{\bm n}=\bigoplus_{d\mid N}\R[G_N/H_d]
\]
with the direct sum Euclidean structure and left permutation action $\pi$.  Put
\[
v=\frac1{\sqrt\wt}\bigoplus_{d\mid N}d\sqrt{n_d}\,\delta_{H_d}.
\]
Then $\|v\|=1$ and
\[
\langle\pi(g)v,\pi(h)v\rangle
=\frac1\wt\sum_{g^{-1}h\in H_d}d^2n_d
=\kappa_{\bm n}(g^{-1}h).
\]
Thus $B_{\bm n}$ is a Gram matrix.  The $d=1$ summand contributes $(n_1/\wt)I_N$, hence
\[
B_{\bm n}\ge \frac{n_1}{\wt}I_N>0.
\]
\end{proof}

\begin{definition}[Cyclotomic metric shadow]\label{def:shadow}
Let $V_{\bm n}=\R[G_N]$ and set
\[
\langle\delta_g,\delta_h\rangle_{\bm n}
=\kappa_{\bm n}(g^{-1}h).
\]
The left regular action is orthogonal.  With $v_{\bm n}=\delta_e$, we call
\[
\mathfrak C(\bm n)=(G_N,V_{\bm n},\pi_{\bm n},v_{\bm n})
\]
the \emph{cyclotomic metric shadow} of $\bm n$.
\end{definition}

\begin{proposition}[Pointed GNS uniqueness]\label{prop:GNS}
Suppose $(G_N,V,\pi,v)$ is cyclic and satisfies
\[
\langle\pi(g)v,\pi(h)v\rangle=\kappa_{\bm n}(g^{-1}h).
\]
Then there is a unique $G_N$-equivariant isometry $U:V_{\bm n}\to V$ with $U(v_{\bm n})=v$.
\end{proposition}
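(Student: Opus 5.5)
The natural route is the standard GNS argument: define $U$ on the basis of $V_{\bm n}=\R[G_N]$, check that it preserves inner products, then use cyclicity for surjectivity and the pointing for uniqueness. Here ``cyclic'' means that $\{\pi(g)v:g\in G_N\}$ spans $V$. I will take $V$ to be a real inner product space with $\pi$ an orthogonal representation, and $V$ is finite dimensional because it is spanned by $N$ vectors.

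\emph{Definition and isometry.} Since $\{\delta_g\}$ is a basis of $V_{\bm n}$, there is a unique linear map $U:V_{\bm n}\to V$ with $U(\delta_g)=\pi(g)v$. It preserves inner products on basis vectors, because
\[
\langle U\delta_g,U\delta_h\rangle=\langle\pi(g)v,\pi(h)v\rangle=\kappa_{\bm n}(g^{-1}h)=\langle\delta_g,\delta_h\rangle_{\bm n}.
\]
Bilinearity extends this to all of $V_{\bm n}$. By Proposition~\ref{prop:positive}, $\langle\cdot,\cdot\rangle_{\bm n}$ is positive definite, so $\|Ux\|=\|x\|_{\bm n}$ forces $U$ to be injective. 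This positivity input is the only nonformal step: without it, $U$ would only be defined on the quotient by the null space. It also shows that $\dim V=N$ and that the vectors $\pi(g)v$ are linearly independent.

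\emph{Surjectivity, equivariance and pointing.} The image of $U$ contains every $\pi(g)v$, and these span $V$ by cyclicity, so $U$ is surjective and hence an isometric isomorphism. Equivariance holds on the basis, since
\[
U(\pi_{\bm n}(k)\delta_g)=U(\delta_{kg})=\pi(kg)v=\pi(k)\pi(g)v=\pi(k)U(\delta_g),
\]
and it extends to $V_{\bm n}$ by linearity. The pointing condition is immediate: $U(v_{\bm n})=U(\delta_e)=v$.

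\emph{Uniqueness.} Let $U'$ be any $G_N$-equivariant map with $U'(\delta_e)=v$. Then $U'(\delta_g)=U'(\pi_{\bm n}(g)\delta_e)=\pi(g)v=U(\delta_g)$ for every $g$. Since the $\delta_g$ span $V_{\bm n}$, we get $U'=U$. Uniqueness therefore does not require the isometry property at all.

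I expect no real obstacle. The only step needing an earlier result is injectivity, which rests on Proposition~\ref{prop:positive}.
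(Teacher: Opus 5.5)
Your proof is correct and follows the paper's argument step for step. You define $U(\delta_g)=\pi(g)v$, obtain the isometry from equality of matrix coefficients, injectivity from positive definiteness (Proposition~\ref{prop:positive}), surjectivity from cyclicity, and uniqueness from the cyclic orbit; beyond the paper, you also write out the equivariance check and note that uniqueness needs only equivariance and the pointing.
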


\begin{proof}
Define $U(\delta_g)=\pi(g)v$.  Equality of matrix coefficients implies preservation of all pairwise inner products.  Positive definiteness gives injectivity, cyclicity gives surjectivity, and uniqueness follows from the cyclic orbit.
\end{proof}

\begin{remark}
The construction does not depend on a choice of primitive generator of $C_N$.  Every automorphism of $C_N$ preserves element orders and therefore preserves $\kappa_{\bm n}$.
\end{remark}

\subsection{Split semisimple algebras}\label{subsec:semisimple-abstract}
Let $k$ be a field and let $S$ be a split finite dimensional semisimple $k$-algebra.  Write its Wedderburn decomposition as
\begin{equation}\label{eq:wedderburn-abstract}
S\cong\prod_{d\in D}M_d(k)^{n_d},
\qquad n_d\in\Z_{\ge0},\qquad n_1>0,
\end{equation}
and set $N_S=\operatorname{lcm}D$.  The multiplicities $n_d$ and the multiset of matrix sizes are invariants of the $k$-algebra isomorphism type.

For every divisor $r\mid N_S$, let $S^{(r)}$ be the sum of the simple two-sided factors of $S$ whose matrix size is divisible by $r$.  Equivalently, under any Wedderburn decomposition as in \eqref{eq:wedderburn-abstract},
\[
S^{(r)}\cong\prod_{r\mid d}M_d(k)^{n_d},
\]
with zero components in the remaining factors.

\begin{proposition}[Semisimple divisibility sectors]\label{prop:semisimple-sectors}
Each $S^{(r)}$ is a characteristic two-sided ideal of $S$, and
\[
\dim_k S^{(r)}=\sum_{r\mid d}d^2n_d=w_r(\bm n).
\]
Consequently the cyclotomic kernel can be written intrinsically as
\begin{equation}\label{eq:semisimple-kernel}
\kappa_S(g)=\frac{\dim_k S^{(\ord g)}}{\dim_k S},
\qquad g\in C_{N_S}.
\end{equation}
\end{proposition}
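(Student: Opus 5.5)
The plan is to work intrinsically with the simple two-sided ideals of $S$ and then compute dimensions in a fixed Wedderburn decomposition.

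First, recall that a finite dimensional semisimple algebra $S$ has only finitely many minimal two-sided ideals $I_1,\dots,I_t$. They are exactly the simple factors, one for each primitive central idempotent $e_i$, and $S=\bigoplus_i I_i$. This decomposition is canonical: it does not depend on any chosen isomorphism with a product of matrix algebras. Since $S$ is split, each $I_i\cong M_{d_i}(k)$ as a $k$-algebra. The size $d_i$ is intrinsic, because $\dim_k I_i=d_i^2$, or equivalently $d_i$ is the $k$-dimension of the unique simple $I_i$-module. Set $S^{(r)}=\bigoplus_{r\mid d_i}I_i$, which is the definition in the statement. It is a two-sided ideal because each $I_i$ is one and a sum of two-sided ideals is a two-sided ideal.

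For the characteristic property, let $\varphi$ be any $k$-algebra automorphism of $S$ (anti-automorphisms could be treated the same way if wanted). Then $\varphi$ maps minimal two-sided ideals bijectively onto minimal two-sided ideals. It also preserves $\dim_k$, hence preserves $d_i^2$ and so $d_i$. Therefore $\varphi$ permutes the set $\{I_i: r\mid d_i\}$, which gives $\varphi(S^{(r)})=S^{(r)}$.

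The dimension count is then immediate from the decomposition \eqref{eq:wedderburn-abstract}: exactly $n_d$ of the $I_i$ have size $d$, so
\[
\dim_k S^{(r)}=\sum_{r\mid d}d^2n_d=w_r(\bm n),
\]
by \eqref{eq:width-r}. This uses that $N_S=\operatorname{lcm}D$, so every $d$ with $n_d>0$ divides $N_S$, matching the range $d\mid N$ in the definition of $w_r$. In particular, $r=1$ gives $S^{(1)}=S$ and $\dim_k S=\wt$. Substituting into \eqref{eq:kappa} yields $\kappa_S(g)=w_{\ord g}/\wt=\dim_kS^{(\ord g)}/\dim_kS$, and here $\ord g\mid N_S$ automatically.

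The only point needing care, and so the main obstacle, is the intrinsic nature of the block decomposition and of the sizes $d_i$. This is what makes $S^{(r)}$ characteristic rather than merely dependent on a choice. I would handle it with the standard fact that the minimal two-sided ideals are the $Se_i$ for the primitive central idempotents $e_i$, which are themselves canonical. Everything else is bookkeeping.
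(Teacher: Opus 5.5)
Your proposal is correct and follows the paper's argument: an automorphism permutes the simple two-sided factors while preserving their matrix sizes, which you justify via the $k$-dimension of the minimal ideals $Se_i$. Hence the sum of the factors with $r\mid d$ is invariant, and the dimension and kernel formulas follow from $\dim_k M_d(k)=d^2$ and the definition of $\kappa_{\bm n}$. Your version also makes explicit the canonicity of the block decomposition, which the paper's proof takes for granted.
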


\begin{proof}
A $k$-algebra automorphism of a split semisimple algebra permutes simple factors only among isomorphic matrix algebras.  Hence the sum of all factors whose matrix size is divisible by $r$ is invariant under every automorphism, so $S^{(r)}$ is characteristic.  The dimension formula follows from $\dim_k M_d(k)=d^2$, and \eqref{eq:semisimple-kernel} is exactly the definition of $\kappa_{\bm n}$ rewritten in terms of these dimensions.
\end{proof}

\begin{theorem}[Recovery of the Wedderburn type]\label{thm:recover-semisimple}
Let $S$ be as in \eqref{eq:wedderburn-abstract}.  The pointed cyclotomic metric shadow $\mathfrak C(S):=\mathfrak C(\bm n)$ together with $\dim_k S$ determines $S$ up to $k$-algebra isomorphism.
\end{theorem}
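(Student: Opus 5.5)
The plan is to read off the divisibility widths from the shadow, recover the multiplicities by M\"obius inversion over the divisor lattice of $N$, and then use Wedderburn's theorem.

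\emph{Step 1: data in the shadow.} The pointed shadow $\mathfrak C(\bm n)=(G_N,V_{\bm n},\pi_{\bm n},v_{\bm n})$ determines $N=|G_N|$. It also determines the function $g\mapsto\langle v_{\bm n},\pi_{\bm n}(g)v_{\bm n}\rangle=\kappa_{\bm n}(g)$. By Proposition~\ref{prop:GNS}, any pointed equivariant isometric model yields the same matrix coefficients, so these data do not depend on the representative. By the Remark following Proposition~\ref{prop:GNS}, $\kappa_{\bm n}(g)$ depends only on $\ord(g)$. For every $r\mid N$ there is an element of order $r$ in $C_N$, so the shadow determines the normalized widths $w_r(\bm n)/\wt$ for all $r\mid N$.

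\emph{Step 2: bring in the dimension.} By Proposition~\ref{prop:semisimple-sectors}, $\wt=w_1=\dim_k S$. Hence knowing $\dim_k S$ gives every width
\[
w_r=\dim_k S\cdot\kappa(g_r)=\dim_k S^{(r)},
\]
where $g_r$ is any element of order $r$.

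\emph{Step 3: recover the multiplicities.} Every $d$ with $n_d>0$ divides $N$, so set $f(d)=d^2n_d$ for $d\mid N$. Then
\[
w_r=\sum_{r\mid d\mid N}f(d).
\]
This is summation over multiples in the divisor lattice of $N$. Dual M\"obius inversion gives
\[
f(r)=\sum_{r\mid d\mid N}\mu(d/r)\,w_d.
\]
Equivalently, we can peel off from the top: $f(N)=w_N$, and in general $f(r)=w_r-\sum_{r\mid d,\,d\ne r}f(d)$, proceeding by downward induction on divisors. Either way this yields $n_d=f(d)/d^2$ for every $d$. Alternatively one could quote the Fourier inversion formula of Theorem~\ref{thm:intro-metric}, but the width version is more self-contained.

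\emph{Step 4: conclude.} A split semisimple algebra is determined up to $k$-algebra isomorphism by its multiset of matrix sizes, i.e.\ by $\bm n$, via Wedderburn's theorem. Therefore $S\cong\prod_d M_d(k)^{n_d}$ is determined.

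The main point needing care is Step 1: checking that the shadow, taken as an abstract pointed object up to equivariant isometry, really determines $\kappa$ on each order class, and that $N$ is part of the data. Both follow immediately from the definitions together with Proposition~\ref{prop:GNS}. Everything else is routine inversion. It is worth remarking that $\dim_k S$ is genuinely needed, since scaling $\bm n$ leaves $\kappa$ unchanged.
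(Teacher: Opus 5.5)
Your argument is correct and has the same structure as the paper's proof: read off $N$ from the group, combine the normalized form with $\dim_k S=\wt$ to remove the scaling ambiguity, apply M\"obius inversion, and finish with Artin--Wedderburn. The one difference is that you invert the widths $w_r=\sum_{r\mid d\mid N}d^2n_d$ directly over multiples, whereas the paper passes to the labelled Fourier eigenvalues and inverts $\wt\Lambda_r=\sum_{d\mid r}d^3n_d$ over divisors (\Cref{thm:reconstruct}); your route skips the Fourier diagonalization and is slightly more elementary.
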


\begin{proof}
The cyclic group in the shadow determines $N_S$, while its labelled Fourier spectrum and $\dim_k S=\wt(\bm n)$ recover every multiplicity $n_d$ by \Cref{thm:reconstruct} below.  Artin-Wedderburn then recovers the product \eqref{eq:wedderburn-abstract} up to $k$-algebra isomorphism.
\end{proof}

\section{Fourier spectrum and reconstruction}\label{sec:fourier}

Fix a generator $\omega$ of $C_N$.  Let
\[
\chi_k(\omega^j)=e^{2\pi i kj/N},\qquad 0\le k<N,
\]
and define
\[
u_k=\frac1{\sqrt N}\sum_{j=0}^{N-1}e^{-2\pi i kj/N}\delta_{\omega^j}.
\]

\begin{theorem}[Fourier spectrum formula]\label{thm:fourier}
The vectors $u_k$ diagonalize $B_{\bm n}$, with eigenvalues
\begin{equation}\label{eq:lambda}
\lambda_k=\frac1\wt\sum_{d\mid\gcd(k,N)}d^3n_d,
\end{equation}
where $\gcd(0,N)=N$.
\end{theorem}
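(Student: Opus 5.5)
Since $B_{\bm n}(g,h)=\kappa_{\bm n}(g^{-1}h)$ depends only on $g^{-1}h$, the matrix $B_{\bm n}$ is circulant in the basis $\delta_{\omega^j}$. The plan is to use the standard fact that the characters of $C_N$ diagonalize any convolution operator, and then evaluate the resulting character sums by splitting $\kappa_{\bm n}$ into indicator functions of subgroups. Concretely, with $B_{\bm n}(\omega^j,\omega^l)=\kappa_{\bm n}(\omega^{l-j})$, one checks directly that
\[
(B_{\bm n}u_k)_{\omega^j}=\frac1{\sqrt N}\sum_{l}\kappa_{\bm n}(\omega^{l-j})e^{-2\pi ikl/N}
=e^{-2\pi i kj/N}\frac1{\sqrt N}\sum_{m=0}^{N-1}\kappa_{\bm n}(\omega^m)e^{-2\pi i km/N}.
\]
Hence $u_k$ is an eigenvector with eigenvalue $\lambda_k=\sum_{m}\kappa_{\bm n}(\omega^m)e^{-2\pi ikm/N}$. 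Since the $u_k$ form an orthonormal basis of $\C^N$, they diagonalize $B_{\bm n}$. Because $\kappa_{\bm n}(g^{-1})=\kappa_{\bm n}(g)$, this sum is real and the sign convention in the exponent is irrelevant.

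Next, write $\kappa_{\bm n}=\frac1\wt\sum_{d\mid N}d^2n_d\,\mathbf 1_{H_d}$, where $H_d=\{g:g^d=1\}$ is the unique subgroup of order $d$. This is exactly the decomposition already used in the proof of \Cref{prop:positive}. By linearity,
\[
\lambda_k=\frac1\wt\sum_{d\mid N}d^2n_d\sum_{h\in H_d}\chi_k(h)^{-1}.
\]
Orthogonality of characters on the subgroup $H_d$ gives $\sum_{h\in H_d}\chi_k(h)=d$ if $\chi_k|_{H_d}$ is trivial, and $0$ otherwise.

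The only step needing care is to identify when $\chi_k|_{H_d}$ is trivial, and I expect this to be the main (though mild) obstacle. The subgroup $H_d$ is generated by $\omega^{N/d}$, and $\chi_k(\omega^{N/d})=e^{2\pi ik/d}$. This equals $1$ exactly when $d\mid k$. Since $d\mid N$ already, the condition is equivalent to $d\mid\gcd(k,N)$, and for $k=0$ it holds for all $d\mid N$, consistent with the convention $\gcd(0,N)=N$. Each such $d$ contributes $d^2n_d\cdot d=d^3n_d$, which gives \eqref{eq:lambda}.
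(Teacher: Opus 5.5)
Your proposal is correct and follows essentially the same route as the paper. You treat $B_{\bm n}$ as circulant, write $\kappa_{\bm n}$ as a weighted sum of subgroup indicators $\mathbf 1_{H_d}$, and use character orthogonality on $H_d$ to get $d$ exactly when $d\mid k$. Your explicit eigenvector check and your identification of $H_d=\langle\omega^{N/d}\rangle$ simply spell out steps the paper leaves implicit.
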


\begin{proof}
The matrix $B_{\bm n}$ is circulant.  Hence
\[
\lambda_k=\sum_{j=0}^{N-1}\kappa_{\bm n}(\omega^j)e^{-2\pi i kj/N}.
\]
Using \eqref{eq:kappa} and interchanging sums gives
\[
\lambda_k=\frac1\wt\sum_d d^2n_d\sum_{g\in H_d}\overline{\chi_k(g)}.
\]
The inner character sum equals $d$ if $d\mid k$ and vanishes otherwise.  This yields \eqref{eq:lambda}.
\end{proof}

\begin{corollary}\label{cor:extrema}
For every $k$,
\[
\lambda_k\ge\frac{n_1}{\wt}>0.
\]
Moreover
\[
\lambda_{\min}=\frac{n_1}{\wt},\qquad
\lambda_{\max}=\lambda_0=\frac{\sum_dd^3n_d}{\sum_dd^2n_d},
\]
so
\[
\operatorname{cond}(B_{\bm n})=\frac{\sum_dd^3n_d}{n_1}.
\]
If $\gcd(k,N)=1$, then $\lambda_k=n_1/\wt$.
\end{corollary}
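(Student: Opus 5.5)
The plan is to read everything off the explicit eigenvalue formula \eqref{eq:lambda} of \Cref{thm:fourier}. There $\lambda_k$ is a sum of nonnegative terms $d^3n_d/\wt$ over the divisors $d\mid\gcd(k,N)$, and the divisor $d=1$ always occurs.

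\emph{Lower bound and minimum.} For every $k$ the term $d=1$ contributes $n_1/\wt$, and all other terms are nonnegative, so $\lambda_k\ge n_1/\wt$. The bound is positive because $n_1>0$ by hypothesis. If $\gcd(k,N)=1$, the only divisor is $d=1$, so $\lambda_k=n_1/\wt$ exactly. Since $N\ge1$, the index $k=1$ (or $k=0$ when $N=1$) has $\gcd(k,N)=1$; when $N=1$ we have $\lambda_0=n_1/\wt=1$ directly. Hence the minimum $n_1/\wt$ is attained.

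\emph{Maximum.} By convention $\gcd(0,N)=N$, so $\lambda_0=\frac1\wt\sum_{d\mid N}d^3n_d$. Every $d$ with $n_d>0$ divides $N=\operatorname{lcm}D(\bm n)$, so this sum equals $\sum_d d^3n_d$. For general $k$, the divisors of $\gcd(k,N)$ form a subset of the divisors of $N$. Since the summands are nonnegative, $\lambda_k\le\lambda_0$. Substituting $\wt=\sum_d d^2n_d$ gives the stated expression for $\lambda_{\max}$.

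\emph{Condition number.} By \Cref{prop:positive}, $B_{\bm n}$ is symmetric positive definite, so its condition number is $\lambda_{\max}/\lambda_{\min}$. The factors of $\wt$ cancel, leaving $\sum_d d^3n_d/n_1$.

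There is no real obstacle here. The only care needed is to use the $\gcd(0,N)=N$ convention and to check that the minimum is actually attained, which the primitive index provides.
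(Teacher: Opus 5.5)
Your proposal is correct and matches the paper's intended argument. The paper states this corollary without a separate proof, as an immediate read-off from the eigenvalue formula \eqref{eq:lambda} of \Cref{thm:fourier}: the $d=1$ term gives the lower bound, the convention $\gcd(0,N)=N$ together with the fact that every support degree divides $N$ gives the maximum, and positive definiteness gives the condition number. Your explicit check that the minimum is attained (via $k=1$, or $k=0$ when $N=1$) is a detail the paper leaves implicit.
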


\subsection{Character energy}
Let
\[
P_k=\frac1N\sum_{g\in C_N}\overline{\chi_k(g)}\pi_{\bm n}(g).
\]

\begin{theorem}[Character energy formula]\label{thm:energy}
For every $k$,
\[
\lambda_k=N\|P_kv_{\bm n}\|^2.
\]
In particular $\sum_{k=0}^{N-1}\lambda_k=N$.
\end{theorem}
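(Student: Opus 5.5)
The plan is to compute $P_kv_{\bm n}$ directly from the definitions and identify its squared norm with the Fourier eigenvalue via the Gram form. First I check that $P_k$ is the orthogonal projection onto the $\chi_k$-isotypic component of $V_{\bm n}\otimes\C$. Here I extend $\langle\cdot,\cdot\rangle_{\bm n}$ sesquilinearly, and $\pi_{\bm n}$ is unitary because it is orthogonal. By character orthogonality for $C_N$, $P_k$ is idempotent and self-adjoint, since $P_k^*=\frac1N\sum_g\chi_k(g)\pi_{\bm n}(g^{-1})=P_k$. Hence
\[
\|P_kv_{\bm n}\|^2=\langle P_kv_{\bm n},v_{\bm n}\rangle_{\bm n}.
\]

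Next I expand this inner product with $v_{\bm n}=\delta_e$. Since $\langle\pi(g)\delta_e,\delta_e\rangle_{\bm n}=\kappa_{\bm n}(g)$, we get
\[
\|P_kv_{\bm n}\|^2=\frac1N\sum_{g\in C_N}\overline{\chi_k(g)}\,\kappa_{\bm n}(g^{-1}).
\]
Using $\kappa_{\bm n}(g^{-1})=\kappa_{\bm n}(g)$ and writing $g=\omega^j$, the sum becomes
\[
\frac1N\sum_j\kappa_{\bm n}(\omega^j)e^{-2\pi ikj/N}.
\]
This is exactly $\frac1N\lambda_k$, by the circulant eigenvalue formula used in the proof of \Cref{thm:fourier}. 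Multiplying by $N$ gives $\lambda_k=N\|P_kv_{\bm n}\|^2$.

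For the trace identity, the projections $P_0,\dots,P_{N-1}$ are mutually orthogonal and sum to the identity, again by character orthogonality. Therefore
\[
\sum_k\|P_kv_{\bm n}\|^2=\|v_{\bm n}\|^2=\kappa_{\bm n}(e)=1,
\]
and so $\sum_k\lambda_k=N$. As an independent check, $\sum_k\lambda_k=\operatorname{tr}B_{\bm n}=N\kappa_{\bm n}(e)=N$.

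The only delicate point is the conjugation and sign convention. I must make sure that the character in $P_k$, the inverse in $\kappa(g^{-1}h)$, and the sign in $u_k$ all match the formula for $\lambda_k$. Evenness of $\kappa_{\bm n}$ under $g\mapsto g^{-1}$ resolves any mismatch, because it makes $\lambda_k$ real and equal to $\lambda_{-k}$. So no genuine obstacle is expected.
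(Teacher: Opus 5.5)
Your proof is correct and follows the paper's argument. You use that $P_k$ is the orthogonal character projection to identify $N\|P_kv_{\bm n}\|^2$ with the Fourier sum $\sum_j\kappa_{\bm n}(\omega^j)e^{-2\pi ikj/N}$ that defines $\lambda_k$, and you get $\sum_k\lambda_k=N$ from Parseval, equivalently $\Tr B_{\bm n}=N$; your handling of the complexification and of the evenness of $\kappa_{\bm n}$ just fills in details the paper leaves implicit.
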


\begin{proof}
Since $P_k$ is the orthogonal character projection,
\[
N\|P_kv_{\bm n}\|^2
=\sum_{g\in C_N}\overline{\chi_k(g)}\langle v_{\bm n},\pi_{\bm n}(g)v_{\bm n}\rangle,
\]
which is exactly the Fourier transform defining $\lambda_k$.  The second identity is Parseval, equivalently $\Tr B_{\bm n}=N$.
\end{proof}

\subsection{M\"obius reconstruction}
For $r\mid N$, let $\Lambda_r$ denote the common eigenvalue for the gcd-class $r$; explicitly $\Lambda_N=\lambda_0$ and
\[
\wt\Lambda_r=\sum_{d\mid r}d^3n_d.
\]

\begin{theorem}[Spectral reconstruction]\label{thm:reconstruct}
For every $d\mid N$,
\begin{equation}\label{eq:mobius}
n_d=\frac{\wt}{d^3}\sum_{e\mid d}\mu(d/e)\Lambda_e.
\end{equation}
Thus $(\{\Lambda_r\}_{r\mid N},\wt)$ uniquely determines $\bm n$.
\end{theorem}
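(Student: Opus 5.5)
The plan is to read the definition of $\Lambda_r$ as a divisor-sum identity and apply classical M\"obius inversion on the divisor lattice of $N$. Define the arithmetic function $f(d)=d^3n_d$ for $d\mid N$; this agrees with the formula in \Cref{thm:fourier}, since $n_d=0$ for $d\nmid N$ by definition of $N=\operatorname{lcm}D(\bm n)$. Also define $F(r)=\wt\,\Lambda_r$. By \Cref{thm:fourier}, for every $k$ with $\gcd(k,N)=r$ we have $\lambda_k=\frac1\wt\sum_{d\mid r}d^3n_d$. Hence the eigenvalue depends only on the gcd-class, $\Lambda_r$ is well defined, and
\[
F(r)=\sum_{d\mid r}f(d),\qquad r\mid N .
\]
The convention $\gcd(0,N)=N$ gives $\Lambda_N=\lambda_0$. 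Every divisor $r\mid N$ actually occurs as a gcd-class, for instance via $k=r$ (or $k=0$ when $r=N$). So $\Lambda_r$ can be read off from the labelled spectrum.

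The key step is to invert this relation. Since the divisors of $d$ are a subset of the divisors of $N$ whenever $d\mid N$, the relation holds on a divisor-closed set. The standard M\"obius inversion formula therefore applies verbatim and gives $f(d)=\sum_{e\mid d}\mu(d/e)F(e)$. To verify it directly, substitute and swap sums:
\[
\sum_{e\mid d}\mu(d/e)\sum_{c\mid e}f(c)=\sum_{c\mid d}f(c)\sum_{m\mid d/c}\mu(m)=f(d),
\]
using $\sum_{m\mid n}\mu(m)=[n=1]$. Dividing by $d^3$ and writing $F(e)=\wt\,\Lambda_e$ yields \eqref{eq:mobius}.

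For the uniqueness statement, the right-hand side of \eqref{eq:mobius} involves only $\wt$ and the $\Lambda_e$ with $e\mid N$. So each $n_d$ with $d\mid N$ is determined, and $n_d=0$ for $d\nmid N$. There is no real obstacle. The only point needing care is that $\Lambda_r$ is well defined and realized for every $r\mid N$, which the gcd-class description above settles.
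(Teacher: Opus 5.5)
Your proposal is correct and follows the paper's proof: both read $\wt\Lambda_r=\sum_{d\mid r}d^3n_d$ as a divisor sum over the divisor-closed set of divisors of $N$ and apply M\"obius inversion. Your direct verification of the inversion, and your check that each $\Lambda_r$ is well defined and occurs in the spectrum, fill in details the paper leaves implicit.
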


\begin{proof}
Apply M\"obius inversion to the divisor sum
\[
\wt\Lambda_r=\sum_{d\mid r}a_d,
\qquad a_d=d^3n_d.
\]
\end{proof}

\begin{corollary}[Projective reconstruction]\label{cor:projective}
If $\bm n$ and $\bm n'$ have the same cyclotomic order, then $B_{\bm n}=B_{\bm n'}$ if and only if $n'_d=c n_d$ for all $d$ and some $c>0$.
\end{corollary}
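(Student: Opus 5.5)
The natural route is through the Fourier eigenvalues of \Cref{thm:fourier} together with the Möbius reconstruction of \Cref{thm:reconstruct}. The key point is that $B_{\bm n}$ is a circulant matrix, so it is determined by its spectrum on the fixed Fourier basis $\{u_k\}$, which is independent of $\bm n$. Consequently
\[
B_{\bm n}=B_{\bm n'}\iff \lambda_k(\bm n)=\lambda_k(\bm n')\ \text{for all }k\iff \Lambda_r(\bm n)=\Lambda_r(\bm n')\ \text{for all } r\mid N.
\]
Both sequences are assumed to have the same cyclotomic order $N$, so the gcd-classes $r\mid N$ and the labelling of eigenvalues agree.

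For the easy direction, suppose $n'_d=cn_d$. Then $\wt(\bm n')=c\,\wt(\bm n)$, and each width $w_r(\bm n')=c\,w_r(\bm n)$ by \eqref{eq:width-r}. Hence $\kappa_{\bm n'}=\kappa_{\bm n}$ by \eqref{eq:kappa}, and the Gram matrices coincide. Note also that $D(\bm n')=D(\bm n)$, consistent with the orders being equal.

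For the converse, assume $B_{\bm n}=B_{\bm n'}$. Then $\Lambda_e(\bm n)=\Lambda_e(\bm n')$ for every $e\mid N$. Applying \eqref{eq:mobius} to both sequences gives, for every $d\mid N$,
\[
\frac{d^3n_d}{\wt(\bm n)}=\sum_{e\mid d}\mu(d/e)\Lambda_e=\frac{d^3n'_d}{\wt(\bm n')}.
\]
Setting $c=\wt(\bm n')/\wt(\bm n)>0$, this yields $n'_d=cn_d$ for all $d\mid N$. For $d\nmid N$ both $n_d$ and $n'_d$ vanish by the definition of $N$ as an lcm of the support, so the identity holds for all $d$. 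Positivity of $c$ follows from $n_1,n'_1>0$.

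The only point needing care is the first reduction: the eigenvalue labelling must be compatible for both sequences. This holds because the basis $u_k$ depends only on $N$ and the chosen generator $\omega$, so equal matrices have equal $\lambda_k$ for each $k$. No other step should present an obstacle.
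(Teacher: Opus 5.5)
Your proof is correct and takes the paper's route: equality of the circulant Gram matrices forces equality of the labelled eigenvalues $\Lambda_r$, and the M\"obius inversion of \Cref{thm:reconstruct} then determines each $d^3n_d/\wt$, so $\bm n$ is fixed up to the scale $c=\wt(\bm n')/\wt(\bm n)$. The scaling direction is immediate, since every width $w_r$ and $\wt$ scale by the same factor $c$.
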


\section{Reflection groups and cyclotomic rigidity}\label{sec:rigidity}

For $g\in C_N$, let $v_g=\pi_{\bm n}(g)v_{\bm n}$ and define
\begin{equation}\label{eq:reflection}
R_g(x)=x-2\langle x,v_g\rangle v_g.
\end{equation}
Let
\[
\Gamma_{\bm n}=\langle R_g:g\in C_N\rangle\subset O(V_{\bm n}).
\]
The cyclic symmetry satisfies $\pi(a)R_g\pi(a)^{-1}=R_{ag}$.

\subsection{Pairwise arithmetic}
For $g\ne h$, put
\[
c(g,h)=\langle v_g,v_h\rangle=\kappa_{\bm n}(g^{-1}h)\in\Q\cap[0,1).
\]
On $P_{g,h}=\Span\{v_g,v_h\}$ the product $R_gR_h$ is a rotation.

\begin{lemma}[Rational finite order obstruction]\label{lem:finitepair}
If $R_gR_h$ has finite order, then
\[
c(g,h)\in\left\{0,\frac12\right\}.
\]
\end{lemma}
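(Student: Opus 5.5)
The plan is to reduce to a rotation angle on the plane $P_{g,h}$ and then apply Niven's theorem on rational cosines.

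First, since $g\ne h$, we have $c=c(g,h)=\kappa_{\bm n}(g^{-1}h)$. This is a ratio of nonnegative integers, so $c\in\Q$. Also $0\le c<1$: the numerator $w_{\ord(g^{-1}h)}$ omits the $d=1$ term $n_1>0$, because $\ord(g^{-1}h)>1$. Hence $v_g,v_h$ are linearly independent unit vectors, and $P_{g,h}$ is two-dimensional.

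Both $R_g$ and $R_h$ fix the orthogonal complement $P_{g,h}^\perp$ pointwise and preserve $P_{g,h}$. On $P_{g,h}$ each acts as the reflection across the line orthogonal to $v_g$ (respectively $v_h$). Write $c=\cos\theta$ with $\theta\in(0,\pi/2]$, the angle between $v_g$ and $v_h$. Then $R_gR_h$ restricted to $P_{g,h}$ is rotation by $2\theta$, and it is the identity on $P_{g,h}^\perp$. So the order of $R_gR_h$ equals the order of this planar rotation. Concretely, the trace of $R_gR_h$ on $P_{g,h}$ is $2\cos 2\theta=2(2c^2-1)$; computing it from the $2\times2$ matrix in the basis $v_g,v_h$ confirms this without any angle conventions.

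Now suppose $R_gR_h$ has finite order. Then $2\theta$ is a rational multiple of $\pi$, so $2\theta=2\pi p/q$ and $\cos 2\theta=2c^2-1\in\Q$. By Niven's theorem, a rational multiple of $\pi$ with rational cosine has cosine in $\{0,\pm\tfrac12,\pm1\}$. So $2c^2-1\in\{0,\pm\tfrac12,\pm1\}$, which gives $c^2\in\{\tfrac12,\tfrac14,\tfrac34,0,1\}$. Since $c$ is rational, $c^2$ is a rational square, leaving $c^2\in\{0,\tfrac14,1\}$. Since $0\le c<1$, we get $c\in\{0,\tfrac12\}$.

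Applying Niven directly to $\cos 2\theta$ means we never need to know whether $\theta$ itself is a rational multiple of $\pi$. The remaining care point is checking that the angle of the rotation $R_gR_h$ is exactly $2\theta$, which the trace computation on $P_{g,h}$ settles.
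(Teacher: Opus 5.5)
Your proof is correct and is essentially the paper's argument. The paper computes the trace of $R_gR_h$ on $P_{g,h}$ as $4c^2-2$, notes that it is a rational algebraic integer and hence an integer, and reads off $c\in\{0,\tfrac12,\tfrac1{\sqrt2},\tfrac{\sqrt3}2\}$ before discarding the irrational values; this is the standard proof of the Niven theorem you invoke, and your explicit checks that $c\in\Q\cap[0,1)$ and that $P_{g,h}$ is two-dimensional supply details the paper leaves implicit.
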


\begin{proof}
On $P_{g,h}$,
\[
\Tr(R_gR_h)=4c(g,h)^2-2.
\]
Finite order implies that this trace is an algebraic integer.  It is rational, hence an integer.  Since $0\le c<1$, the possible values are
\[
c=0,\frac12,\frac1{\sqrt2},\frac{\sqrt3}{2},
\]
and rationality eliminates the last two.
\end{proof}

\begin{lemma}[Support rigidity]\label{lem:support}
Assume
\[
\frac{w_r(\bm n)}\wt\in\left\{0,\frac12\right\}
\]
for every divisor $r>1$ of $N$.  Then
\[
\supp(\bm n)=\{1,N\},\qquad n_1=N^2n_N.
\]
\end{lemma}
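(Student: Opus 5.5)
The plan is to exploit that the widths $w_r$ are tail sums over the divisibility poset and are monotone: if $r\mid r'$ then every $d$ with $r'\mid d$ also satisfies $r\mid d$, so $w_{r'}\le w_r$. Write $p$ for any prime dividing $N$. Such primes exist: the argument is meant for $N\ge2$, and if $N=1$ there are no divisors $r>1$, so the hypothesis is vacuous. I will also use that $N$ is the lcm of the support, so $w_N\ge N^2n_N$. Moreover every divisor $r>1$ of $N$ divides some $d$ in the support, hence $w_r>0$. By hypothesis, therefore, $w_r=\wt/2$ for every divisor $r>1$ of $N$.

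First I show $N\in\supp(\bm n)$. We have $w_N=\wt/2>0$, and $w_N=\sum_{N\mid d,\,d\mid N}d^2n_d=N^2n_N$. Hence $n_N>0$ and $N^2n_N=\wt/2$.

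Next, every divisor $1<r\mid N$ now satisfies $w_r=\wt/2=w_N$. Since $w_r-w_N=\sum_{r\mid d,\;d\ne N}d^2n_d$, this gives $n_d=0$ for every $d\ne N$ that is divisible by some $r>1$, i.e.\ for every $d>1$, $d\ne N$. So $\supp(\bm n)\subseteq\{1,N\}$, with $n_1>0$ by standing assumption. Finally $\wt=n_1+N^2n_N$ together with $N^2n_N=\wt/2$ yields $n_1=N^2n_N$.

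The only delicate point is excluding $w_r=0$ for some $r>1$, which would break the chain. This follows because $r\mid N=\operatorname{lcm}D(\bm n)$ and $r>1$ force some prime power of $r$ to divide some $d\in D(\bm n)$. More directly, I will avoid this issue by using only $r=N$ first, where $w_N=N^2n_N$ with $n_N$ possibly zero. In that case $N$ is not in the support, yet every prime power $q^a\,\|\,N$ divides some $d$ in the support. I take $r=q^a$; then $w_{q^a}>0$, so $w_{q^a}=\wt/2$. Applying this to two different primes would force the two disjoint-looking sets of $d$ to overlap, and a short case analysis (comparing $w_{q^a}$ with $w_{N}$) shows that if $N$ is not a prime power, the sum of all the $w_{q^a}$ exceeds what is allowed unless $n_N>0$. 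I expect this bookkeeping in the non-prime-power case to be the main obstacle, and I would first try the clean monotonicity argument: $w_r\in\{0,\wt/2\}$ with $w_r>0$ for $r$ dividing an element of the support.
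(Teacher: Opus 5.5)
Your proof has a genuine gap at its first step. You claim that every divisor $r>1$ of $N$ divides some element of $D(\bm n)$, and hence $w_r>0$. This is false. The identity $N=\operatorname{lcm}D(\bm n)$ guarantees it only for the prime powers $q^a\,\|\,N$. It does not guarantee it for composite $r$, and in particular not for $r=N$: with support $\{1,2,3\}$ one has $N=6$ but $w_6=0$. So you have not shown $w_N>0$, i.e.\ $N\in\supp(\bm n)$, and that is the real content of the lemma. Once $n_N>0$ is known, the rest of your argument is correct: by monotonicity $w_r\ge w_N>0$, so $w_r=\wt/2=w_N$ for all $1<r\mid N$, which kills every $n_d$ with $1<d<N$, and then $n_1=N^2n_N$ follows.

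Your fallback sketch does not close the gap either. The prime-power widths alone cannot force $n_N>0$. Take $n_1=900$, $n_6=25$, $n_{10}=9$, $n_{15}=4$. Then $N=30$, $\wt=3600$, $w_2=w_3=w_5=1800=\wt/2$ and $w_{30}=0$, yet $n_{30}=0$. The sum of the prime-power widths, $5400$, does not exceed the double-counting bound $2(\wt-n_1)=5400$. What excludes this example is the hypothesis at the composite divisors $6,10,15$, where $w_r=900=\wt/4$, and your sketch never invokes those divisors.

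There are two ways to close the gap.
\begin{itemize}
\item The paper takes a divisibility-maximal $m\in D(\bm n)\setminus\{1\}$. Then $w_m=m^2n_m>0$, so $w_m=\wt/2$. Two distinct maximal elements $m,m'$ would give $\wt\ge n_1+w_m+w_{m'}>\wt$. Hence $m$ is the unique maximal element, every support degree divides it, and $N=m$.
\item Staying closer to your idea: if $r,s>1$ divide $N$ and $w_r=w_s=\wt/2$, inclusion--exclusion gives $w_{\operatorname{lcm}(r,s)}\ge w_r+w_s-(\wt-n_1)=n_1>0$. The hypothesis then forces $w_{\operatorname{lcm}(r,s)}=\wt/2$. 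Iterating over the prime powers $q^a\,\|\,N$ yields $w_N>0$.
\end{itemize}
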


\begin{proof}
Let $D_+=D(\bm n)\setminus\{1\}$ and choose a maximal element $m\in D_+$ for divisibility.  Then no support degree strictly divisible by $m$ occurs, so
\[
w_m=m^2n_m>0,
\]
and hence $w_m=\wt/2$.  There cannot be two distinct maximal elements $m,m'$, since then
\[
\wt\ge n_1+w_m+w_{m'}=n_1+\wt>\wt.
\]
Thus $m$ is the unique maximal element.  Every element of the finite divisibility poset $D_+$ lies below a maximal element, hence every $d\in D_+$ divides $m$.  Therefore $N=m$.

If $1<d<N$ also lies in the support, then
\[
w_d\ge d^2n_d+N^2n_N>\wt/2,
\]
a contradiction.  Thus $D(\bm n)=\{1,N\}$.  Finally
\[
N^2n_N=w_N=\wt/2,
\qquad
\wt=n_1+N^2n_N,
\]
which gives $n_1=N^2n_N$.
\end{proof}

\begin{theorem}[Finite cyclotomic reflection rigidity]\label{thm:finite}
Assume $N\ge2$.  The following are equivalent:
\begin{enumerate}[label=(\roman*)]
\item $\Gamma_{\bm n}$ is finite;
\item $\supp(\bm n)=\{1,N\}$ and $n_1=N^2n_N$;
\item $B_{\bm n}=\frac12(I_N+J_N)$;
\item $\Gamma_{\bm n}\cong W(A_N)\cong S_{N+1}$.
\end{enumerate}
\end{theorem}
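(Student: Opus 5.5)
We prove the cycle (i)$\Rightarrow$(ii)$\Rightarrow$(iii)$\Rightarrow$(iv)$\Rightarrow$(i); the last implication is trivial.

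\emph{(i)$\Rightarrow$(ii).} Suppose $\Gamma_{\bm n}$ is finite. Then every product $R_gR_h$ has finite order, so \Cref{lem:finitepair} gives $c(g,h)\in\{0,\tfrac12\}$ for all $g\ne h$. Now let $r>1$ be any divisor of $N$. The cyclic group $C_N$ contains an element $g$ of order exactly $r$, and
\[
c(e,g)=\kappa_{\bm n}(g)=\frac{w_r(\bm n)}{\wt}.
\]
Hence $w_r/\wt\in\{0,\tfrac12\}$ for every divisor $r>1$ of $N$, and \Cref{lem:support} yields (ii).

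\emph{(ii)$\Rightarrow$(iii).} This is a direct computation. Take $g\ne e$, of order $r>1$ with $r\mid N$. Since the support is $\{1,N\}$, we get $w_r=N^2n_N$. Using $\wt=n_1+N^2n_N=2N^2n_N$, this gives $\kappa_{\bm n}(g)=\tfrac12$. Also $\kappa_{\bm n}(e)=1$, so
\[
B_{\bm n}=\tfrac12(I_N+J_N).
\]

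\emph{(iii)$\Rightarrow$(iv).} This is the main step: we identify the Gram configuration with the $A_N$ root system. Let $e_0,\dots,e_N$ be the standard basis of $\R^{N+1}$, and put $\alpha_j=(e_0-e_j)/\sqrt2$ for $1\le j\le N$. These are unit vectors, and for $i\ne j$ we have $\langle\alpha_i,\alpha_j\rangle=\tfrac12$. So their Gram matrix is exactly $\tfrac12(I+J)$. Since $B_{\bm n}$ is positive definite, the map $v_{\omega^j}\mapsto\alpha_j$ extends to an isometry from $V_{\bm n}$ onto the hyperplane $\sum_i x_i=0$ (compare the uniqueness argument in \Cref{prop:GNS}). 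Under this isometry, $R_{\omega^j}$ becomes the reflection in $e_0-e_j$, i.e.\ the transposition $(0\,j)$ acting on coordinates. The transpositions $(0\,j)$, $1\le j\le N$, generate $S_{N+1}$. Moreover, $S_{N+1}$ acts faithfully on the sum-zero hyperplane when $N\ge1$. Therefore
\[
\Gamma_{\bm n}\cong S_{N+1}\cong W(A_N).
\]

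\emph{Expected obstacle.} The only delicate point is faithfulness: we must check that the generated subgroup of $O(V_{\bm n})$ is all of $S_{N+1}$, not a proper quotient of it. Faithfulness of the standard representation handles this. As a sanity check, the vectors $\alpha_j$ are roots of $A_N$ in a star configuration, not a simple system, but they still generate the full Weyl group.
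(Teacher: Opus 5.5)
Your proof is correct and follows the paper's argument step for step: \Cref{lem:finitepair} combined with \Cref{lem:support} for (i)$\Rightarrow$(ii), the direct width computation $\wt=2N^2n_N$, $\kappa_{\bm n}(g)=\tfrac12$ for (ii)$\Rightarrow$(iii), and the star configuration $(e_0-e_j)/\sqrt2$ in $\R^{N+1}$, whose reflections are the transpositions $(0\,j)$ generating $S_{N+1}$, for (iii)$\Rightarrow$(iv). You also spell out two points the paper leaves implicit: that every divisor $r>1$ of $N$ occurs as an element order in $C_N$, so that $c(e,g)=w_r/\wt$, and that the isometry onto the sum-zero hyperplane, together with faithfulness of the standard representation, gives $\Gamma_{\bm n}\cong S_{N+1}$ rather than a quotient.
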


\begin{proof}
If $\Gamma_{\bm n}$ is finite, every $R_gR_h$ has finite order, so \Cref{lem:finitepair} implies $w_r/\wt\in\{0,1/2\}$ for every $r>1$.  Apply \Cref{lem:support}.

Conversely, suppose $\supp(\bm n)=\{1,N\}$ and $n_1=N^2n_N$.  Then $\wt=2N^2n_N$ and every nonidentity $g\in C_N$ satisfies $\kappa_{\bm n}(g)=1/2$.  Thus $B_{\bm n}=\frac12(I+J)$.

Let $e_0,\ldots,e_N$ be the standard basis of $\R^{N+1}$ and put
\[
u_i=\frac{e_0-e_i}{\sqrt2},\qquad 1\le i\le N.
\]
Their Gram matrix is $\frac12(I+J)$.  Reflection perpendicular to $u_i$ exchanges $e_0$ and $e_i$.  Hence the generating reflections correspond to the star transpositions $(0\,i)$, which generate $S_{N+1}=W(A_N)$.
\end{proof}

\begin{corollary}[Spectral half-equipartition]\label{cor:equip}
For $N\ge2$, $\Gamma_{\bm n}$ is finite if and only if
\[
\lambda_0=\frac{N+1}{2},\qquad \lambda_k=\frac12\quad(1\le k<N).
\]
Equivalently, $\|P_kv_{\bm n}\|^2=1/(2N)$ for every nontrivial character.
\end{corollary}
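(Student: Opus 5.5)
The plan is to reduce \Cref{cor:equip} to \Cref{thm:finite}, using the Fourier formula of \Cref{thm:fourier} and the fact that a circulant matrix is determined by its spectrum.

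\emph{Forward direction.} Suppose $\Gamma_{\bm n}$ is finite. By \Cref{thm:finite}, $B_{\bm n}=\frac12(I_N+J_N)$. The matrix $J_N$ has eigenvalue $N$ on the constant vector $u_0$ and eigenvalue $0$ on every $u_k$ with $k\ne0$. Hence $\lambda_0=\frac{1+N}{2}$ and $\lambda_k=\frac12$ for $1\le k<N$.

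\emph{Converse.} Suppose the spectrum is as stated. The vectors $u_k$ form an orthonormal eigenbasis of the circulant $B_{\bm n}$, by \Cref{thm:fourier}. The matrix $\frac12(I+J)$ has exactly the same eigenvalues on the same eigenbasis. So $B_{\bm n}=\sum_k\lambda_k u_ku_k^*$ equals $\frac12(I+J)$, and \Cref{thm:finite}(iii)$\Rightarrow$(i) gives finiteness.

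An arithmetic alternative avoids appealing to the spectral decomposition. By \Cref{cor:extrema}, $\lambda_1=n_1/\wt$, so the hypothesis gives $n_1=\wt/2$. Taking $r=N$ in the definition of $\Lambda_r$ gives $\wt\lambda_0=\sum_d d^3n_d$. The same Möbius bookkeeping as in \Cref{thm:reconstruct} then forces $\supp(\bm n)=\{1,N\}$, with $n_1=N^2n_N$. The spectral argument is cleaner, so I will use it. The only point to check there is that $\lambda_0=\frac{N+1}{2}$ is consistent with the trace identity of \Cref{thm:energy}: $\frac{N+1}{2}+\frac{N-1}{2}=N$.

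\emph{Reformulation via $P_k$.} By \Cref{thm:energy}, $\lambda_k=N\|P_kv_{\bm n}\|^2$. So the condition $\lambda_k=\frac12$ is the same as $\|P_kv_{\bm n}\|^2=1/(2N)$.

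It remains to see that the conditions on nontrivial characters alone suffice. If $\|P_kv_{\bm n}\|^2=1/(2N)$ for all $k\ne0$, then $\lambda_k=\frac12$ for all $k\ne0$. Parseval, $\sum_k\lambda_k=N$, then forces $\lambda_0=N-\frac{N-1}{2}=\frac{N+1}{2}$. This recovers the full spectral condition.

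There is no real obstacle. The only step needing care is the converse, namely that equal spectra on a common eigenbasis give equal matrices. This holds because both matrices are circulant and so are simultaneously diagonalized by the $u_k$.
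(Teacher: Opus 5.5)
Your proof is correct and is essentially the argument the paper leaves implicit, since no separate proof is given there: you read condition (iii) of \Cref{thm:finite}, $B_{\bm n}=\frac12(I_N+J_N)$, on the common Fourier eigenbasis $u_k$ of \Cref{thm:fourier}, and you get the $P_k$ reformulation from $\lambda_k=N\|P_kv_{\bm n}\|^2$ together with the trace identity $\sum_k\lambda_k=N$, which recovers $\lambda_0=\frac{N+1}{2}$ from the nontrivial characters. Your unused arithmetic alternative would also work: $\Lambda_r=\frac12$ for every proper divisor $r$ of $N$ forces $n_d=0$ for $1<d<N$ and $n_1=\wt/2=N^2n_N$, which is just condition (ii) of \Cref{thm:finite} read through \eqref{eq:lambda}.
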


\begin{theorem}[Lattice-preserving rigidity]\label{thm:lattice}
For $N\ge2$, $\Gamma_{\bm n}$ is finite if and only if it preserves a full rank lattice in $V_{\bm n}$.
\end{theorem}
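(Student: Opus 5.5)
The forward direction is immediate. If $\Gamma_{\bm n}$ is finite, average any inner product over the group. Then take the $\Z$-span of the images $\gamma(x_i)$, $\gamma\in\Gamma_{\bm n}$, of any basis $x_1,\dots,x_N$ of $V_{\bm n}$. This is a finitely generated subgroup of the real vector space $V_{\bm n}$ containing a basis, and it is $\Gamma_{\bm n}$-stable. It is discrete because it is contained in $\frac1M\Z$-combinations of finitely many vectors; more simply, take $L=\sum_{\gamma}\gamma(L_0)$ for $L_0=\bigoplus\Z\delta_g$, which is a finite sum of lattices commensurable after noting each $\gamma$ has rational matrix entries in the basis $\delta_g$. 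That last point is the cleanest route, so I will use it. Every $R_g$ has matrix entries in $\Q$ in the basis $\{\delta_h\}$, since $\langle\delta_h,v_g\rangle=\kappa_{\bm n}(h^{-1}g)\in\Q$. Hence every element of a finite $\Gamma_{\bm n}$ lies in $GL_N(\Q)$, with a common denominator $M$. Then $L=\sum_{\gamma\in\Gamma_{\bm n}}\gamma(\Z^N)\subset\frac1M\Z^N$ is a full-rank $\Gamma_{\bm n}$-stable lattice.

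For the converse, I will suppose $\Gamma_{\bm n}$ preserves a full-rank lattice $L\subset V_{\bm n}$ and aim to reach condition (ii) of \Cref{thm:finite}. The key observation is that every $\gamma\in\Gamma_{\bm n}$ is then represented, in a $\Z$-basis of $L$, by an integer matrix. Its characteristic polynomial therefore has integer coefficients, and in particular its trace is an integer. I apply this to $\gamma=R_gR_h$ for $g\ne h$. On $P_{g,h}$ this acts as a rotation, and on $P_{g,h}^\perp$ it acts as the identity, the plane being two-dimensional because $c(g,h)<1$. So
\[
\Tr_{V_{\bm n}}(R_gR_h)=(N-2)+4c(g,h)^2-2 ,
\]
which must be an integer. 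Hence $4c^2\in\Z$, so $c\in\{0,\tfrac12,\tfrac1{\sqrt2},\tfrac{\sqrt3}{2}\}$. Since $c$ is rational, $c\in\{0,\tfrac12\}$, exactly as in \Cref{lem:finitepair}.

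Taking $h=g\cdot x$ with $\ord(x)=r$ for each divisor $r>1$ of $N$ gives $w_r/\wt\in\{0,\tfrac12\}$. \Cref{lem:support} then yields $\supp(\bm n)=\{1,N\}$ and $n_1=N^2n_N$, and \Cref{thm:finite} gives finiteness.

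The only step needing care is the trace computation. I must justify that preserving $L$ makes the full trace on $V_{\bm n}$ an integer, rather than only the trace on $P_{g,h}$. This is handled by the identity-on-complement decomposition, since the complement contributes the integer $N-2$.
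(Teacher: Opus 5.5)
Your proof is correct. The converse direction is the paper's own argument: integrality of $\Tr(R_gR_h)=N-4+4c(g,h)^2$ in a lattice basis, rationality of $c(g,h)$ forcing $c\in\{0,\tfrac12\}$, then \Cref{lem:support} and \Cref{thm:finite}. Your explanation of the full-space trace, via $R_gR_h$ acting as the identity on $P_{g,h}^\perp$, makes explicit a step the paper leaves implicit.

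The forward direction is where you genuinely differ. The paper first uses \Cref{thm:finite} to reduce to the $A_N$ case $B_{\bm n}=\tfrac12(I+J)$. It then says that the $A_N$ root lattice is preserved; this is the $\Z$-span of the $v_g$, up to rescaling, and it is stable because $R_gv_h=v_h-v_g$ for $h\ne g$. You bypass the classification instead. Since $R_g\delta_h=\delta_h-2\kappa_{\bm n}(h^{-1}g)\delta_g$ has rational coefficients, a finite $\Gamma_{\bm n}$ lies in $GL_N(\Q)$ with a common denominator $M$. Then $\Z^N\subseteq\sum_\gamma\gamma(\Z^N)\subseteq\frac1M\Z^N$, with $\Z^N=\bigoplus_h\Z\delta_h$, is a stable full-rank lattice. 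Your route is self-contained and works for any finite group generated by reflections with rational Gram matrix. The paper's route names the lattice explicitly. The two lattices in fact coincide: once all off-diagonal Gram entries equal $\tfrac12$, each $R_g$ is integral on $\bigoplus_h\Z\delta_h$, so your sum collapses to $\Z^N$.

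When you write it up, drop the false starts at the beginning of the forward direction. Averaging an inner product is unnecessary, since the $R_g$ are already orthogonal. More importantly, the $\Z$-span of the orbit of an arbitrary basis need not be discrete for a finite group: a rotation of order $5$ in $\R^2$ preserves no lattice. It is precisely the rationality of the $R_g$ in the $\delta$-basis that makes your argument work.
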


\begin{proof}
The finite $A_N$ case preserves the $A_N$ root lattice.  Conversely, suppose a full rank lattice is invariant.  Then $R_gR_h$ is integral in a lattice basis, so its full space trace is an integer.  Since
\[
\Tr(R_gR_h)=N-4+4c(g,h)^2,
\]
we obtain $4c(g,h)^2\in\Z$.  With $c(g,h)\in\Q\cap[0,1)$, this again forces $c\in\{0,1/2\}$.  Apply \Cref{lem:support,thm:finite}.
\end{proof}

\begin{corollary}[Semisimple algebra form of rigidity]\label{cor:semisimple-rigidity}
Let $S=\prod_d M_d(k)^{n_d}$ be a split finite dimensional semisimple algebra with $n_1>0$ and $N=\operatorname{lcm}\{d:n_d>0\}\ge2$.  The reflection group of its cyclotomic metric shadow is finite (equivalently, lattice preserving) if and only if there is an integer $m\ge1$ such that
\begin{equation}\label{eq:semisimple-rigid-form}
S\cong k^{N^2m}\times M_N(k)^m.
\end{equation}
In this case the reflection group is $W(A_N)\cong S_{N+1}$.
\end{corollary}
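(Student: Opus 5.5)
This corollary is a direct translation of \Cref{thm:finite} and \Cref{thm:lattice}, using the dictionary between split semisimple algebras and block size data. The plan has three steps.

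First, attach to $S$ its Wedderburn multiplicity sequence $\bm n=(n_d)$. By definition, the cyclotomic metric shadow of $S$ is $\mathfrak C(\bm n)$, and by \Cref{prop:semisimple-sectors} its kernel is $\kappa_S=\kappa_{\bm n}$. Hence the reflection group of the shadow of $S$ is exactly $\Gamma_{\bm n}$. The hypotheses $n_1>0$ and $N\ge2$ are precisely those of \Cref{thm:finite}. By \Cref{thm:lattice}, finiteness is equivalent to lattice preservation, which accounts for the parenthetical equivalence. By \Cref{thm:finite}, finiteness is equivalent to condition (ii):
\[
\supp(\bm n)=\{1,N\},\qquad n_1=N^2n_N.
\]

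Second, translate condition (ii) into algebra. Set $m=n_N$, so $m\ge1$ because $N\in\supp(\bm n)$. Then $n_1=N^2m$ and $n_d=0$ for all $d\notin\{1,N\}$. The Wedderburn decomposition \eqref{eq:wedderburn-abstract} therefore reads
\[
S\cong k^{N^2m}\times M_N(k)^m.
\]

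Conversely, suppose $S\cong k^{N^2m}\times M_N(k)^m$ with $m\ge1$. The multiplicities of a split semisimple algebra are isomorphism invariants, so $n_1=N^2m$, $n_N=m$, and all other $n_d$ vanish. The lcm of the support is then $N$, consistent with the hypothesis, so condition (ii) holds. The final assertion, that the reflection group is $W(A_N)\cong S_{N+1}$, is implication (i)$\Rightarrow$(iv) of \Cref{thm:finite}.

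The only step needing care is the use of isomorphism invariance of Wedderburn multiplicities in the converse direction. This is the Artin--Wedderburn uniqueness statement already recorded after \eqref{eq:wedderburn-abstract}, so I expect no real obstacle.
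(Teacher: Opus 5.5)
Your proposal is correct and follows essentially the same route as the paper: invoke \Cref{thm:finite} and \Cref{thm:lattice} to reduce finiteness (equivalently, lattice preservation) to $\supp(\bm n)=\{1,N\}$ and $n_1=N^2n_N$, then set $m=n_N$. Your converse, which uses isomorphism invariance of the Wedderburn multiplicities, spells out the step the paper dismisses as immediate, and the identification with $W(A_N)\cong S_{N+1}$ comes from (iv) of \Cref{thm:finite} exactly as you say.
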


\begin{proof}
By \Cref{thm:finite,thm:lattice}, finiteness is equivalent to $\supp(\bm n)=\{1,N\}$ and $n_1=N^2n_N$.  Writing $m=n_N$ gives \eqref{eq:semisimple-rigid-form}, and the converse is immediate.
\end{proof}

\subsection{The cases \texorpdfstring{$N=1$ and $N=2$}{N=1 and N=2}}
If $N=1$, then $V_{\bm n}\cong\R$, $B=[1]$, and the unique Householder reflection is $-1$.  Hence
\[
\Gamma_{\bm n}\cong C_2=W(A_1).
\]

For $N=2$,
\[
B=\begin{pmatrix}1&\kappa\\ \kappa&1\end{pmatrix},
\qquad
\kappa=\frac{4n_2}{n_1+4n_2}.
\]
Let $\theta=\arccos\kappa$.

\begin{theorem}[Finite-or-dense dichotomy]\label{thm:dense}
Assume $N=2$ and $n_1,n_2>0$.  If $n_1=4n_2$, then $\Gamma_{\bm n}\cong S_3$.  Otherwise $\theta/\pi\notin\Q$, $\Gamma_{\bm n}\cong D_\infty$, and its image is dense in $O(V_{\bm n})$.
\end{theorem}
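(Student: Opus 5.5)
The case $n_1=4n_2$ follows from \Cref{thm:finite}, since for $N=2$ condition (ii) reads $\supp(\bm n)=\{1,2\}$ and $n_1=4n_2$; this gives $\Gamma_{\bm n}\cong S_3$. Concretely, $\kappa=1/2$ and $\theta=\pi/3$, so $R_eR_\omega$ is a rotation of order $3$.

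For the remaining case the plan is as follows. Here $V_{\bm n}$ is two-dimensional and $\Gamma_{\bm n}$ is generated by the two reflections $R_e$ and $R_\omega$. Their unit normals $v_e,v_\omega$ satisfy $\langle v_e,v_\omega\rangle=\kappa$, and since $n_1>0$ and $n_2>0$ we have $\kappa\in(0,1)$ rational. The product $\rho=R_eR_\omega$ is the rotation by $2\theta$ with $\theta=\arccos\kappa$, and $\Tr\rho=4\kappa^2-2$. Any group generated by two reflections in a plane is dihedral of order $2\cdot\ord(\rho)$, or $D_\infty$ when $\rho$ has infinite order.

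The first step is to show that $\theta/\pi\notin\Q$ when $\kappa\ne1/2$. Suppose $\theta/\pi\in\Q$. Then $\rho$ has finite order, so \Cref{lem:finitepair} gives $\kappa\in\{0,1/2\}$. We have $\kappa\ne0$ since $n_2>0$, and $\kappa=1/2$ is equivalent to $4n_2=n_1+4n_2-4n_2$, i.e. $n_1=4n_2$. This is excluded, so $\theta/\pi$ is irrational. The same argument can be phrased via Niven's theorem: $\cos\theta$ and hence $2\cos 2\theta=4\kappa^2-2$ is rational, and a rational value of $2\cos$ at a rational multiple of $\pi$ must lie in $\{0,\pm1,\pm2\}$.

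The second step is to identify the group. Since $\rho$ has infinite order, $\langle R_e,R_\omega\rangle$ is the free product of two groups of order $2$, namely $D_\infty$. The reason is that every element is either a rotation $\rho^k$ or a reflection $\rho^kR_e$, and these are all distinct because the rotations $\rho^k$ are pairwise distinct.

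The third step is density. The closure of $\{\rho^k\}$ is a closed subgroup of $SO(V_{\bm n})\cong S^1$. It is infinite because $\theta/\pi$ is irrational (Kronecker), so it is all of $SO(V_{\bm n})$. Adding the coset $SO\cdot R_e$ then gives the whole of $O(V_{\bm n})$, so the image of $\Gamma_{\bm n}$ is dense.

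The only mildly delicate step is the irrationality argument, and it is handled by \Cref{lem:finitepair}. Everything else is standard planar geometry.
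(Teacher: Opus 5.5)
Your proposal is correct and follows essentially the same route as the paper. Both arguments go the same way: \Cref{lem:finitepair} excludes a rational angle unless $\kappa=1/2$, i.e.\ $n_1=4n_2$; otherwise $R_eR_\omega$ is an irrational rotation, so it generates $D_\infty$ and, together with one reflection, a dense subgroup of $O(V_{\bm n})$. Your extra details are all sound refinements of the paper's terser proof: citing \Cref{thm:finite} for the $S_3$ case, the closed-subgroups-of-$S^1$ argument, and the explicit injectivity of $C_2*C_2\to\Gamma_{\bm n}$.
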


\begin{proof}
If $\theta/\pi\in\Q$, the product of the two reflections has finite order.  By \Cref{lem:finitepair}, $\kappa\in\{0,1/2\}$.  Since $n_2>0$, $\kappa>0$, hence $\kappa=1/2$, equivalent to $n_1=4n_2$.  Otherwise the product is an irrational rotation, whose powers are dense in $SO(2)$.  Together with either reflection this gives a dense subgroup of $O(2)$.  Two involutions whose product has infinite order generate $D_\infty$.
\end{proof}

\section{Threefold flops and contraction algebras}\label{sec:flops}

Let $f:X\to Y$ be a flopping contraction from a smooth quasi-projective complex threefold whose exceptional locus is a single rational curve $C\cong\mathbb P^1$.  Let $\ell$ be the scheme theoretic length and let
\[
n_j=n_{j[C]},\qquad 1\le j\le\ell,
\]
be Katz's local genus zero Gopakumar-Vafa invariants \cite{Katz}; the classical threefold flop and length background is developed in \cite{Reid,KatzMorrison}, while the corresponding multiple cover and length structure goes back to Bryan-Katz-Leung \cite{BKL}.  Related singular symplectic flop phenomena, including Ruan cohomological invariance questions, were studied by Chen-Li-Li-Zhao \cite{ChenLiLiZhao2014}.  We set
\[
\bm n_f=(n_1,\ldots,n_\ell).
\]
Toda's theorem \cite{TodaWidth} gives
\begin{equation}\label{eq:toda}
\dim_\C A_{\con}=\sum_{j=1}^{\ell}j^2n_j,
\qquad
\dim_\C A_{\con}^{\ab}=n_1,
\end{equation}
and records $n_j\ge1$ for $1\le j\le\ell$.  Consequently
\[
N_f=\operatorname{lcm}(1,\ldots,\ell),
\qquad
\wt(\bm n_f)=\dim A_{\con}.
\]

\begin{proposition}[Primitive width formula]\label{prop:primitive-width}
For every primitive character of $C_{N_f}$,
\[
\lambda_{\rm prim}(f)=\frac{\dim A_{\con}^{\ab}}{\dim A_{\con}}.
\]
\end{proposition}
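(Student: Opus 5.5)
The plan is to combine the Fourier spectrum formula of \Cref{thm:fourier} with Toda's identities \eqref{eq:toda}. Take a primitive character $\chi_k$ of $C_{N_f}$, meaning one with $\gcd(k,N_f)=1$.

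First, I would specialize \eqref{eq:lambda} to this $k$. The only divisor of $\gcd(k,N_f)=1$ is $d=1$, so the divisor sum collapses to the single term $1^3n_1$. This gives
\[
\lambda_k=\frac{n_1}{\wt(\bm n_f)},
\]
which is exactly the last assertion of \Cref{cor:extrema}. The value is the same for every primitive character, so $\lambda_{\rm prim}(f)$ is well defined.

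Second, I would substitute the flop data. By \eqref{eq:toda} the numerator is $n_1=\dim_\C A_{\con}^{\ab}$. For the denominator, Toda's record that $n_j\ge1$ for $1\le j\le\ell$ gives $D(\bm n_f)=\{1,\ldots,\ell\}$. Hence $N_f=\operatorname{lcm}(1,\ldots,\ell)$, and every $j\le\ell$ divides $N_f$. Therefore
\[
\wt(\bm n_f)=\sum_{d\mid N_f}d^2n_d=\sum_{j=1}^{\ell}j^2n_j=\dim_\C A_{\con}.
\]
Dividing gives the stated formula.

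I do not expect a real obstacle here. The only point needing care is the bookkeeping in the second step: the sum defining $\wt$ runs over divisors of $N_f$, and it agrees with Toda's sum because the whole support $\{1,\ldots,\ell\}$ divides $N_f$ and $n_d=0$ for $d>\ell$. It is also worth noting that $n_1>0$, which the standing hypotheses require, follows from Toda's positivity statement.
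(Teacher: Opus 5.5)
Your proposal is correct and matches the paper's proof: the paper also just combines the primitive-character case of \Cref{cor:extrema} ($\lambda_k=n_1/\wt$ when $\gcd(k,N)=1$) with Toda's identities \eqref{eq:toda}. Your check that $\wt(\bm n_f)=\sum_{j\le\ell}j^2n_j$, because the whole support $\{1,\ldots,\ell\}$ divides $N_f$, is the same identification $\wt(\bm n_f)=\dim A_{\con}$ that the paper records just before the proposition.
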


\begin{proof}
Combine \Cref{cor:extrema} with \eqref{eq:toda}.
\end{proof}

\begin{theorem}[Higher length infinitude]\label{thm:higherlength}
If $\ell\ge3$, then $\Gamma_f$ is infinite.
\end{theorem}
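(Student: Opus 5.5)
The plan is to reduce the statement directly to \Cref{thm:finite}. By Toda's theorem \eqref{eq:toda}, the spectrum $\bm n_f=(n_1,\ldots,n_\ell)$ satisfies $n_j\ge1$ for every $1\le j\le\ell$. Hence
\[
D(\bm n_f)=\{1,2,\ldots,\ell\},\qquad N_f=\operatorname{lcm}(1,\ldots,\ell).
\]
For $\ell\ge3$ we have $N_f\ge6\ge2$, so the hypothesis of \Cref{thm:finite} holds. By \Cref{thm:finite}, finiteness of $\Gamma_f=\Gamma_{\bm n_f}$ would force $\supp(\bm n_f)=\{1,N_f\}$.

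The contradiction comes from the size of the support. Since $\ell\ge3$, the support contains both $2$ and $3$, so it contains at least three elements, whereas $\{1,N_f\}$ has at most two. Alternatively, $\operatorname{lcm}(1,\ldots,\ell)>\ell$ for $\ell\ge3$, so $N_f\notin\supp(\bm n_f)$ at all. Either way condition (ii) of \Cref{thm:finite} fails, and $\Gamma_f$ is infinite.

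As a sanity check, I would verify the same conclusion directly with \Cref{lem:finitepair}. Take $g$ of order $N_f$. Since $N_f>\ell$, no support degree is divisible by $N_f$, so $w_{N_f}=0$ and $c=0$. Now take $g$ of order $2$. Then $w_2\ge 4n_2>0$, and one checks $w_2\ne\wt/2$ once both $2$ and an odd degree $3$ are present. This is not needed if we simply invoke \Cref{lem:support}.

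There is no real obstacle in this argument. The only point requiring care is that the nonvanishing $n_j\ge1$ for all $j\le\ell$ is exactly the input from Toda recorded in \eqref{eq:toda}, and that $N_f\ge2$ so that \Cref{thm:finite} applies.
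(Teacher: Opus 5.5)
Your proposal is correct and is essentially the paper's own proof: Toda's $n_j\ge1$ puts $1,2,3$ in the support, which contradicts the support $\{1,N_f\}$ that \Cref{thm:finite} forces when $\Gamma_f$ is finite. One caveat concerns your optional sanity check: the claim that $w_2\ne\wt/2$ whenever $2$ and $3$ lie in the support is false (for $(n_1,n_2,n_3)=(3,3,1)$ one gets $w_2=12=\wt/2$, and the obstruction comes instead from $w_3=9\notin\{0,12\}$), but your main argument does not use it.
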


\begin{proof}
The support contains $1,2,3$.  By \Cref{thm:finite}, a finite group would require support exactly $\{1,N_f\}$, a contradiction.
\end{proof}

If $\ell=1$, then $N_f=1$ and therefore $\Gamma_f\cong W(A_1)$, independently of the value of $n_1$.

Suppose now $\ell=2$.  Then
\[
\dim A_{\con}=n_1+4n_2,
\qquad
\dim A_{\con}^{\ab}=n_1,
\]
and
\[
\kappa_f=\frac{4n_2}{n_1+4n_2}
=1-\frac{\dim A_{\con}^{\ab}}{\dim A_{\con}}.
\]

\begin{theorem}[Half width rigidity for length two]\label{thm:halfwidth}
For a length two irreducible flop, the following are equivalent:
\[
\Gamma_f\text{ is finite},\qquad n_1=4n_2,
\qquad \dim A_{\con}=2\dim A_{\con}^{\ab}.
\]
In the finite case $\Gamma_f\cong W(A_2)\cong S_3$; otherwise $\Gamma_f$ is a dense copy of $D_\infty$ in the corresponding two-dimensional orthogonal group.
\end{theorem}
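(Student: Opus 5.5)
The plan is to reduce the statement to the rank two dichotomy \Cref{thm:dense}. The only input needed from flop geometry is Toda's formula \eqref{eq:toda} together with the positivity $n_1,n_2\ge1$.

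First I fix the setup for a length two irreducible flop. Toda's theorem records $n_1\ge1$ and $n_2\ge1$, so $\supp(\bm n_f)=\{1,2\}$ and $N_f=\operatorname{lcm}(1,2)=2$. We are therefore exactly in the setting of \Cref{thm:dense}, with $n_1,n_2>0$. By \eqref{eq:toda},
\[
\dim A_{\con}=n_1+4n_2,\qquad \dim A_{\con}^{\ab}=n_1 .
\]

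Next comes the purely arithmetic equivalence of the two numerical conditions. The identity $\dim A_{\con}=2\dim A_{\con}^{\ab}$ reads $n_1+4n_2=2n_1$, which is equivalent to $n_1=4n_2$. Equivalently, $\kappa_f=1-\dim A_{\con}^{\ab}/\dim A_{\con}=\tfrac12$.

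Then I invoke the rigidity results. If $n_1=4n_2$, then \Cref{thm:dense} (or \Cref{thm:finite} with $N=2$) gives $\Gamma_f\cong S_3\cong W(A_2)$, which is finite. Conversely, if $\Gamma_f$ is finite, \Cref{thm:finite} forces $n_1=N^2n_N=4n_2$. Alternatively, I can argue directly: \Cref{lem:finitepair} gives $\kappa_f\in\{0,\tfrac12\}$, and $\kappa_f>0$ because $n_2>0$. In the remaining case $n_1\ne4n_2$, \Cref{thm:dense} says $\theta=\arccos\kappa_f$ is an irrational multiple of $\pi$. Hence $\Gamma_f\cong D_\infty$, and it is dense in $O(V_{\bm n_f})\cong O(2)$ for the cyclotomic metric.

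I do not expect a genuine obstacle. The one point to state carefully is that $n_2\ge1$ for a length two flop. This is what puts us in the hypothesis $n_2>0$ of \Cref{thm:dense} and excludes the degenerate case $\kappa=0$, which would give $N=1$ and a different group.
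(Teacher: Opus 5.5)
Your proposal is correct and follows essentially the same route as the paper: the equivalence of finiteness with $n_1=4n_2$, and the dense $D_\infty$ alternative, both come from \Cref{thm:dense}, while the equivalence with $\dim A_{\con}=2\dim A_{\con}^{\ab}$ is the arithmetic consequence of Toda's formula \eqref{eq:toda}. Your explicit check that $n_2\ge1$, which gives $N_f=2$ and the hypothesis $n_1,n_2>0$ of \Cref{thm:dense}, is a point the paper uses only implicitly.
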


\begin{proof}
The equivalence of the first two conditions is \Cref{thm:dense}; the second and third are equivalent by \eqref{eq:toda}.  The final statement also follows from \Cref{thm:dense}.
\end{proof}

\subsection{Semisimple shadows and type \texorpdfstring{$D$}{D}}
Suppose a contraction algebra admits a semisimple deformation with generic fibre
\[
S\cong\prod_d M_d(\C)^{n_d}.
\]
For each $r\mid N$ define the semisimple divisibility sector
\[
S^{(r)}=\prod_{r\mid d}M_d(\C)^{n_d}.
\]
Then
\[
\dim S^{(r)}=\sum_{r\mid d}d^2n_d=w_r(\bm n),
\]
and hence
\[
\kappa_{\bm n}(g)=\frac{\dim S^{(\ord g)}}{\dim S}.
\]
Thus the cyclotomic metric is determined by the semisimple block-size data.  This use of semisimple and derived contraction-algebra information is complementary to reconstruction and categorification results such as \cite{HuaToda,HuaKeller,Davison}.  We make no claim here that the sectors $S^{(r)}$ specialize to deformation-independent ideals of the original contraction algebra.

Jelisiejew-Smoktunowicz recently proved Wemyss' semisimple deformation conjecture for types $A$ and $D$ \cite{JS2026}.  In type $D_{n,m}$ their semisimple target has the form
\[
M_2(\C)^{n-1}\times \C^{2m}\qquad (m\le n),
\]
and
\[
M_2(\C)^{n-1}\times \C^{2n+1}\qquad (m>n).
\]
Hence, respectively,
\[
(n_1,n_2)=(2m,n-1),\qquad
(n_1,n_2)=(2n+1,n-1).
\]

\begin{theorem}[Type $D$ classification]\label{thm:Dclassification}
Among the noncommutative $D_{n,m}$ contraction algebras covered by \cite{JS2026} (so that the $M_2$-multiplicity is positive), the cyclotomic reflection group is finite if and only if $(n,m)=(2,2)$.  In that case $(n_1,n_2)=(4,1)$ and $\Gamma\cong W(A_2)$.  Every other case has a dense infinite dihedral reflection group.
\end{theorem}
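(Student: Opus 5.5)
The plan is to reduce everything to the length-two dichotomy of \Cref{thm:dense}, applied to the two families of multiplicities read off from the Jelisiejew--Smoktunowicz semisimple targets. Noncommutativity in the covered range means the $M_2$-multiplicity $n-1$ is positive, i.e.\ $n\ge2$. In both families $n_1>0$ and $n_2>0$, so the support is $\{1,2\}$ and $N=2$. \Cref{thm:dense} then says $\Gamma$ is finite, and isomorphic to $W(A_2)\cong S_3$, exactly when $n_1=4n_2$; otherwise $\Gamma\cong D_\infty$ with dense image in $O(V_{\bm n})$. The whole classification therefore reduces to solving $n_1=4n_2$ in positive integers within each family.

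\textbf{Case $m\le n$.} Here $(n_1,n_2)=(2m,n-1)$, and the condition $n_1=4n_2$ reads $2m=4(n-1)$, i.e.\ $m=2n-2$. Combined with $m\le n$ this gives $2n-2\le n$, so $n\le2$. Together with $n\ge2$ we get $n=2$ and hence $m=2$. This is the pair $(n,m)=(2,2)$, with $(n_1,n_2)=(4,1)$. Conversely, $(2,2)$ does satisfy $m\le n$ and $4=4\cdot1$, so it is a genuine finite case.

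\textbf{Case $m>n$.} Here $(n_1,n_2)=(2n+1,n-1)$, and the condition becomes $2n+1=4n-4$, i.e.\ $2n=5$. This has no integer solution; a parity argument suffices, since $2n+1$ is odd while $4(n-1)$ is even. So no case with $m>n$ is finite.

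All remaining admissible pairs $(n,m)$ fail $n_1=4n_2$, so \Cref{thm:dense} gives the dense infinite dihedral conclusion for each of them. The only real obstacle is bookkeeping. One must make sure the phrase ``covered by \cite{JS2026}, noncommutative'' is translated correctly into $n\ge2$, so that $n_2>0$ and the hypotheses of \Cref{thm:dense} hold. One must also make sure the boundary $m=n$ is assigned to the first family, as in the stated targets. Beyond that, the argument is the elementary arithmetic above.
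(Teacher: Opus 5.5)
Your proposal is correct and follows the paper's proof essentially verbatim. Both arguments reduce to the $N=2$ dichotomy of \Cref{thm:dense} and solve $n_1=4n_2$ in each family: $m=2(n-1)\le n$ forces $(n,m)=(2,2)$, and $2n+1=4(n-1)$ has no integer solution.

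Your extra bookkeeping only makes explicit what the paper leaves implicit: translating noncommutativity into $n\ge2$ (so $n_2>0$), the parity argument, and checking that $(2,2)$ lies in the range $m\le n$. Note also that $n_1=2m>0$ in the first family tacitly uses $m\ge1$.
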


\begin{proof}
For $m\le n$, finiteness requires $2m=4(n-1)$, hence $m=2(n-1)$.  Together with $m\le n$ in the noncommutative range this forces $(n,m)=(2,2)$.  For $m>n$, the equation $2n+1=4(n-1)$ has no integral solution.  Apply \Cref{thm:dense}.
\end{proof}

\begin{remark}[Coarseness of the shadow]
The cyclotomic shadow records only the Gopakumar-Vafa-visible part of the contraction algebra.  In particular, the examples of Brown-Wemyss with the same Gopakumar-Vafa spectrum but non-isomorphic contraction algebras \cite{BrownWemyss} have identical cyclotomic metric shadows.
\end{remark}

\section{BPS difference Hessians and cyclotomic Riemann-Hilbert problems}\label{sec:RHP}

This section provides an analytic realization of the metric shadow.  The abstract results in \Cref{sec:metric,sec:fourier,sec:rigidity} do not depend on this realization.  For background on BPS/DT wall-crossing and curve counting structures related to the ingredients used below, see \cite{KontsevichSoibelman,PandharipandeThomas,BridgelandHall,MaulikToda,Szendroi}.  The use of profile function/resolvent thermodynamic methods is in the same broad circle of ideas as \cite{LiYagi2021}.  The local Gromov-Witten difference equation problem studied in \cite{LiDiffEq2026} has a different objective: it reconstructs enumerative information from forcing terms rather than constructing the reflection metric considered here.

\subsection{Exact chargewise difference equations}
Fix $\hbar>0$ and put $q=e^{-\hbar}\in(0,1)$.  Introduce the entire charge factor
\begin{equation}\label{eq:Mh}
\mathcal M_{\hbar}(Q)=\prod_{m=1}^{\infty}(1-q^mQ)^m.
\end{equation}
The product converges locally uniformly on the $Q$-plane, since on every compact set
\[
\sum_{m\ge1}m|q|^m|Q|<\infty.
\]
Its zero set is $\{q^{-m}:m\ge1\}$.  For $d\in D(\bm n)$ and $\Re t>0$, define a local logarithm of the positive charge factor by
\begin{equation}\label{eq:Phi+}
\Phi^+_{\hbar,d}(t)=n_d\log\mathcal M_{\hbar}(e^{-dt}).
\end{equation}
In the positive chamber this agrees with
\[
\Phi^+_{\hbar,d}(t)=n_d\sum_{m\ge1}m\log(1-e^{-m\hbar-dt}),
\]
and expanding the logarithm gives
\begin{equation}\label{eq:Phi-series}
\Phi^+_{\hbar,d}(t)
=-n_d\sum_{k\ge1}\frac{e^{-kdt}}{k(2\sinh(k\hbar/2))^2}.
\end{equation}

The product \eqref{eq:Mh} satisfies an exact functional equation.

\begin{lemma}[Charge factor functional equation]\label{lem:Mfunctional}
For all $Q\in\C$ as an identity of entire/meromorphic products,
\begin{equation}\label{eq:Mfunctional}
\frac{\mathcal M_{\hbar}(qQ)\mathcal M_{\hbar}(q^{-1}Q)}{\mathcal M_{\hbar}(Q)^2}=1-Q,
\end{equation}
where the quotient is understood by cancellation at common zeros.
\end{lemma}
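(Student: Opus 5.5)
The plan is to compare the factors of the three products termwise after shifting the index. Since $\mathcal M_\hbar$ is entire, the left side of \eqref{eq:Mfunctional} is meromorphic. It is therefore enough to prove the identity for $|Q|$ small, where no factor vanishes and logarithms of each factor are defined by their principal branches. The identity then extends to all of $\C$ by analytic continuation, with the cancellation at common zeros being the statement that the quotient has removable singularities.

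First, write
\[
\mathcal M_\hbar(qQ)=\prod_{m\ge1}(1-q^{m+1}Q)^m=\prod_{m\ge2}(1-q^mQ)^{m-1}
\]
and
\[
\mathcal M_\hbar(q^{-1}Q)=\prod_{m\ge1}(1-q^{m-1}Q)^m=(1-Q)\prod_{m\ge1}(1-q^mQ)^{m+1}.
\]
Both reindexings are legitimate because the products converge absolutely and locally uniformly, by the bound $\sum_m m|q|^m|Q|<\infty$ already noted for \eqref{eq:Mh}.

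Next, multiply the two products. For each $m\ge1$ the factor $(1-q^mQ)$ appears with total exponent $(m-1)+(m+1)=2m$. This is also correct for $m=1$, where the exponent is $0+2=2$. The remaining factor is $(1-Q)$. Hence the product equals $(1-Q)\mathcal M_\hbar(Q)^2$, and dividing by $\mathcal M_\hbar(Q)^2$ gives \eqref{eq:Mfunctional}.

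For an independent check in logarithmic form, use the series \eqref{eq:Phi-series} with $Q=e^{-dt}$. We have $\log\mathcal M_\hbar(Q)=-\sum_k Q^k/(k(q^{-k/2}-q^{k/2})^2)$. The second difference in $Q\mapsto q^{\pm1}Q$ multiplies the $k$th term by $q^k+q^{-k}-2=(q^{k/2}-q^{-k/2})^2$. The result is $-\sum_k Q^k/k=\log(1-Q)$, which confirms the identity for $|Q|<1$.

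The only delicate point is justifying the reindexing and regrouping of infinite products. This follows from absolute convergence. The phrase ``cancellation at common zeros'' is handled by the identity theorem: both sides are meromorphic, they agree near $0$, and the right side $1-Q$ is entire, so all apparent poles of the quotient are removable.
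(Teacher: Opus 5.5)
Your proof is correct and follows the paper's own argument: reindex $\mathcal M_\hbar(qQ)$ and $\mathcal M_\hbar(q^{-1}Q)$, note that each factor $(1-q^mQ)$ then carries total exponent $2m$, and observe that the leftover factor is $1-Q$. Your analytic-continuation framing and the logarithmic check via \eqref{eq:Phi-series} are harmless additions; the paper itself mentions the latter as an alternative in its proof of the charge difference equation.
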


\begin{proof}
Reindexing the two shifted products gives
\[
\mathcal M_{\hbar}(q^{-1}Q)
=(1-Q)\prod_{r\ge1}(1-q^rQ)^{r+1}
\]
and
\[
\mathcal M_{\hbar}(qQ)
=\prod_{r\ge2}(1-q^rQ)^{r-1}.
\]
After division by $\mathcal M_{\hbar}(Q)^2$, every factor with $r\ge1$ cancels and only $1-Q$ remains.
\end{proof}

For $a>0$, write
\[
\Delta_aF(t)=F(t+a)-2F(t)+F(t-a),
\qquad
\nabla_a^2F=\frac{\Delta_aF}{a^2}.
\]

\begin{proposition}[Exact charge difference equation]\label{prop:difference}
For $\Re t>\hbar/d$,
\[
\Delta_{\hbar/d}\Phi^+_{\hbar,d}(t)=n_d\log(1-e^{-dt}),
\]
with the logarithm taken on the positive chamber branch.  Equivalently,
\[
\hbar^2\nabla_{\hbar/d}^2\Phi^+_{\hbar,d}(t)
=d^2n_d\log(1-e^{-dt}).
\]
\end{proposition}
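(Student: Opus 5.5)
The plan is to substitute $Q=e^{-dt}$ into the functional equation of \Cref{lem:Mfunctional} and then take logarithms carefully on the positive chamber.

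\textbf{Step 1 (substitution).} A shift of $t$ by $\pm\hbar/d$ multiplies $Q=e^{-dt}$ by $e^{\mp\hbar}=q^{\pm1}$. Hence
\[
\Phi^+_{\hbar,d}(t\pm\hbar/d)=n_d\log\mathcal M_\hbar(q^{\pm1}Q),
\]
and the second difference $\Delta_{\hbar/d}\Phi^+_{\hbar,d}(t)$ becomes $n_d$ times
\[
\log\mathcal M_\hbar(qQ)+\log\mathcal M_\hbar(q^{-1}Q)-2\log\mathcal M_\hbar(Q).
\]

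\textbf{Step 2 (domain and branches).} On $\Re t>\hbar/d$ we have $|q^{-1}Q|=e^{\hbar-d\Re t}<1$, and so also $|Q|<1$ and $|qQ|<1$. Every factor $1-q^mQ'$ appearing in the three products then has modulus of its second term less than $1$. Therefore the principal-branch series
\[
\log\mathcal M_\hbar(Q')=\sum_{m\ge1} m\log(1-q^mQ')
\]
converges absolutely for each of the three arguments $Q'$. This is exactly the positive-chamber branch defining $\Phi^+_{\hbar,d}$, so no $2\pi i$ ambiguity arises.

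\textbf{Step 3 (term-by-term cancellation).} Rather than simply taking the log of \eqref{eq:Mfunctional}, I would redo its reindexing at the level of these absolutely convergent sums. For $\log\mathcal M_\hbar(q^{-1}Q)$ this gives
\[
\log(1-Q)+\sum_{r\ge1}(r+1)\log(1-q^rQ),
\]
and for $\log\mathcal M_\hbar(qQ)$ it gives
\[
\sum_{r\ge2}(r-1)\log(1-q^rQ).
\]
Subtracting $2\sum_{r\ge1} r\log(1-q^rQ)$, the coefficient of $\log(1-q^rQ)$ is zero for every $r\ge1$: for $r\ge2$ it is $(r+1)+(r-1)-2r=0$, and for $r=1$ it is $2-2=0$. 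Only $\log(1-Q)=\log(1-e^{-dt})$ survives, and it too is on the principal branch since $|Q|<1$.

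As a cross-check, the series \eqref{eq:Phi-series} gives the same answer. Applying $\Delta_{\hbar/d}$ to $e^{-kdt}$ multiplies it by $e^{k\hbar}-2+e^{-k\hbar}=(2\sinh(k\hbar/2))^2$, which cancels the denominator and leaves $-n_d\sum_k e^{-kdt}/k=n_d\log(1-e^{-dt})$. This check is valid wherever the series for $\Phi^+$ converges at $t-\hbar/d$, which it does since $\Re(t-\hbar/d)>0$.

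\textbf{Step 4 (rescaled form).} The second identity follows from $\hbar^2\nabla^2_{\hbar/d}=\hbar^2\Delta_{\hbar/d}/(\hbar/d)^2=d^2\Delta_{\hbar/d}$.

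The only delicate point is the branch bookkeeping in Step 2. The series route of the cross-check sidesteps it entirely, so I would present that as the main argument and the product identity as its motivation.
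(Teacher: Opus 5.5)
Your proposal is correct and follows the paper's proof: the substitution $Q=e^{-dt}$ turns $t\mapsto t\pm\hbar/d$ into $Q\mapsto q^{\pm1}Q$, and you take logarithms of the functional equation \eqref{eq:Mfunctional} on the positive chamber, with the direct central difference of the series \eqref{eq:Phi-series} as an equivalent check. Your term-by-term reindexing of the absolutely convergent principal-branch series is a more explicit version of the paper's branch argument.
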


\begin{proof}
Under $t\mapsto t\pm\hbar/d$, the variable $Q=e^{-dt}$ is sent to $q^{\pm1}Q$.  Taking a logarithm of \eqref{eq:Mfunctional} in the positive chamber gives the first identity.  The second follows from $\hbar^2/(\hbar/d)^2=d^2$.  Equivalently, one may apply the central difference directly to \eqref{eq:Phi-series}.
\end{proof}

The symmetric completion can now be defined without analytically continuing the logarithmic series.  Let
\[
\Sigma_{\hbar,d}=\left\{\frac{m\hbar}{d}:m\in\Z_{>0}\right\}.
\]
For real $t>0$ with $t\notin\Sigma_{\hbar,d}$, set
\begin{equation}\label{eq:Phisym}
\Phi^{\rm sym}_{\hbar,d}(t)
=\frac{n_d}{2}\left[
\log\left|\mathcal M_{\hbar}(e^{-dt})\right|
+\log\left|\mathcal M_{\hbar}(e^{dt})\right|
\right],
\end{equation}
and extend to $t<0$ by evenness.  The excluded set is precisely the zero lattice of the second charge factor.  It is discrete and plays no role in the continuous interaction extracted below.

\begin{theorem}[Exact symmetric difference Hessian]\label{thm:hessian}
For real $|t|>\hbar/d$ with $|t|\notin\Sigma_{\hbar,d}$,
\begin{equation}\label{eq:hessian}
\hbar^2\nabla_{\hbar/d}^2\Phi^{\rm sym}_{\hbar,d}(t)
=d^2n_d\log\!\left(2\sinh\frac{d|t|}{2}\right).
\end{equation}
\end{theorem}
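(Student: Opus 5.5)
The plan is to take absolute values in the functional equation of \Cref{lem:Mfunctional} and apply it twice, once to each of the two charge factors in \eqref{eq:Phisym}. Since $\Phi^{\rm sym}_{\hbar,d}$ is even and $\log(2\sinh(d|t|/2))$ is even, it suffices to treat real $t>\hbar/d$ with $t\notin\Sigma_{\hbar,d}$. Then all three points $t,\ t\pm\hbar/d$ are strictly positive, so only the $t>0$ formula \eqref{eq:Phisym} is used and evenness plays no role inside the difference operator.

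First I would check that none of the three points meets a zero of either factor.
\begin{itemize}
\item The factor $\mathcal M_\hbar(e^{-dt})$ never vanishes for real $t>0$, because its zeros sit at $Q=q^{-m}>1$.
\item The factor $\mathcal M_\hbar(e^{dt})$ vanishes exactly when $t\in\Sigma_{\hbar,d}$.
\item If $t-\hbar/d=m\hbar/d$ with $m\ge1$, then $t=(m+1)\hbar/d\in\Sigma_{\hbar,d}$, which is excluded.
\item If $t+\hbar/d=m\hbar/d$, then $t=(m-1)\hbar/d$. Since $t>\hbar/d>0$, this forces $m-1\ge1$, so again $t\in\Sigma_{\hbar,d}$, which is excluded.
\end{itemize}
Hence every logarithm appearing in the second difference is finite, and \eqref{eq:Mfunctional} holds at the relevant $Q$ as a genuine identity of nonzero numbers.

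Next, the shift $t\mapsto t\pm\hbar/d$ acts on the two variables as follows:
\begin{itemize}
\item it sends $Q=e^{-dt}$ to $q^{\pm1}Q$, as in \Cref{prop:difference};
\item it sends $Q'=e^{dt}$ to $q^{\mp1}Q'$.
\end{itemize}
The second difference is symmetric in the two shifts, so in both cases it produces $\log|\mathcal M_\hbar(qQ)|+\log|\mathcal M_\hbar(q^{-1}Q)|-2\log|\mathcal M_\hbar(Q)|$ (respectively with $Q'$). Taking $\log|\cdot|$ of \eqref{eq:Mfunctional} therefore gives
\[
\Delta_{\hbar/d}\Phi^{\rm sym}_{\hbar,d}(t)=\frac{n_d}{2}\Bigl[\log|1-e^{-dt}|+\log|1-e^{dt}|\Bigr].
\]
Here $1-e^{dt}\ne0$ because $t>0$.

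The final step is elementary:
\[
(1-e^{-dt})(e^{dt}-1)=e^{dt}-2+e^{-dt}=\left(2\sinh\frac{dt}{2}\right)^2 .
\]
So the bracket equals $2\log(2\sinh(dt/2))$, and the right-hand side above becomes $n_d\log(2\sinh(d|t|/2))$. Multiplying by $\hbar^2/(\hbar/d)^2=d^2$ converts $\Delta_{\hbar/d}$ into $\hbar^2\nabla^2_{\hbar/d}$ and yields \eqref{eq:hessian}.

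The only step needing care is the first one, the bookkeeping of the excluded set. It ensures that the cancellation convention ``at common zeros'' in \Cref{lem:Mfunctional} is never invoked and that the shifted points stay off $\Sigma_{\hbar,d}$. The hypothesis $|t|>\hbar/d$ is exactly what keeps $t-\hbar/d$ positive and rules out the boundary case $m-1=0$. Everything else is a direct algebraic consequence of the functional equation.
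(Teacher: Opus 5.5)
Your proposal is correct and follows the same route as the paper: apply \Cref{lem:Mfunctional} at $Q=e^{-dt}$ and at $Q=e^{dt}$, take logarithms of absolute values, combine via $(1-e^{-dt})(e^{dt}-1)=\bigl(2\sinh\frac{dt}{2}\bigr)^2$, multiply by $d^2$, and handle $t<0$ by evenness. Your explicit check that $t$ and $t\pm\hbar/d$ avoid the zero set is a detail the paper leaves implicit. It also follows directly from the product identity $\mathcal M_\hbar(qQ)\mathcal M_\hbar(q^{-1}Q)=(1-Q)\mathcal M_\hbar(Q)^2$: since $\mathcal M_\hbar(Q)\neq0$ and $Q\neq1$, both shifted factors must be nonzero.
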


\begin{proof}
It is enough to take $t>0$.  Apply \eqref{eq:Mfunctional} first to $Q=e^{-dt}$ and then to $Q=e^{dt}$, and take logarithms of absolute values.  This gives
\[
\Delta_{\hbar/d}\Phi^{\rm sym}_{\hbar,d}(t)
=\frac{n_d}{2}\bigl[\log|1-e^{-dt}|+\log|1-e^{dt}|\bigr].
\]
Since $t>0$,
\[
\frac12\bigl[\log(1-e^{-dt})+\log(e^{dt}-1)\bigr]
=\log\!\left(2\sinh\frac{dt}{2}\right).
\]
Multiplying by $d^2$ proves the claim.  The negative half line follows from the even definition.
\end{proof}

\begin{remark}[Meaning of the symmetric completion]
Equation \eqref{eq:Phisym} pairs the same entire charge factor at $Q=e^{-dt}$ and $Q^{-1}=e^{dt}$.  No claim is made that the resulting product is, for an arbitrary higher length flop, the complete geometric Donaldson-Thomas partition function.  The exact statement used in this paper is the functional equation \eqref{eq:Mfunctional} and its symmetric difference Hessian consequence \eqref{eq:hessian}.
\end{remark}

Define
\begin{equation}\label{eq:L}
\mathcal L_{\bm n}(t)=\sum_d d^2n_d\log\!\left(2\sinh\frac{d|t|}{2}\right),\qquad t\ne0.
\end{equation}
Equation \eqref{eq:hessian} gives an exact sectorwise finite-$\hbar$ realization of this interaction away from the discrete zero lattices; the right hand side itself extends smoothly away from $t=0$.

\begin{lemma}[Local singularity]\label{lem:singularity}
As $t\to0$,
\[
\mathcal L_{\bm n}(t)=\wt\log|t|+C_{\bm n}+O(t^2),
\qquad
C_{\bm n}=\sum_dd^2n_d\log d,
\]
and
\[
K_{\bm n}(t):=\mathcal L'_{\bm n}(t)=\frac\wt t+O(t).
\]
\end{lemma}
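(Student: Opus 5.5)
The plan is to expand each summand of $\mathcal L_{\bm n}$ separately near $t=0$ and then sum; only finitely many $d$ occur, so all error terms combine uniformly. For fixed $d\ge1$ and $t\ne0$, write
\[
2\sinh\frac{d|t|}{2}=d|t|\cdot\frac{\sinh(d|t|/2)}{d|t|/2}.
\]
The function $s\mapsto \sinh(s)/s$ is even, real analytic and positive, with Taylor expansion $1+s^2/6+O(s^4)$. Hence $\log(\sinh s/s)=s^2/6+O(s^4)$ is an even analytic function vanishing to second order at $s=0$.

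Taking logarithms gives
\[
\log\!\left(2\sinh\frac{d|t|}{2}\right)=\log|t|+\log d+\frac{d^2t^2}{24}+O(t^4).
\]
The even analytic correction depends only on $t^2$, so the absolute value causes no trouble in the remainder. Multiplying by $d^2n_d$ and summing over the finite support then gives
\[
\mathcal L_{\bm n}(t)=\Bigl(\sum_d d^2n_d\Bigr)\log|t|+\sum_d d^2n_d\log d+O(t^2).
\]
Here the first coefficient is $\wt$ and the constant is $C_{\bm n}$, as claimed.

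For the derivative, I would not differentiate an $O$-term blindly. Instead I would use the exact formula
\[
\frac{d}{dt}\log\!\left(2\sinh\frac{d|t|}{2}\right)=\frac d2\coth\frac{dt}{2}\qquad (t\ne0),
\]
which holds on both half lines because $\coth$ is odd. The Laurent expansion $\coth x=1/x+x/3+O(x^3)$ then gives
\[
\frac d2\coth\frac{dt}{2}=\frac1t+\frac{d^2t}{12}+O(t^3).
\]
Multiplying by $d^2n_d$ and summing yields $K_{\bm n}(t)=\wt/t+O(t)$. Equivalently, the $O(t^2)$ remainder in $\mathcal L_{\bm n}$ is an explicit even analytic function $\sum_d d^2n_d\log(\sinh(dt/2)/(dt/2))$, and its derivative is $O(t)$.

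There is no real obstacle here. The only point needing care is justifying termwise differentiation of the remainder, and this is handled by writing the remainder as an explicit analytic function rather than by differentiating an $O$-term.
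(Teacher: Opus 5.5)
Your proposal is correct and follows the same route as the paper: factor $2\sinh\frac{d|t|}{2}=d|t|\bigl(1+\frac{d^2t^2}{24}+O(t^4)\bigr)$ and sum over the finite support. For the derivative you use the exact formula $\frac d2\coth\frac{dt}{2}$, which matches the paper's later expression for $K_{\bm n}$; this is a more careful rendering of the paper's brief ``differentiate away from $t=0$'', not a different argument.
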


\begin{proof}
Use
\[
2\sinh\frac{d|t|}{2}=d|t|\left(1+\frac{d^2t^2}{24}+O(t^4)\right)
\]
and differentiate away from $t=0$.
\end{proof}

Thus the singular part of the saddle kernel is the standard Hilbert-transform kernel; the higher sectors modify its regular and nonlocal cyclotomic structure.

\subsection{The associated variational problem}
Let $\rho$ be a compactly supported density of finite logarithmic energy and fixed mass $M$.  Define
\begin{equation}\label{eq:energy}
\mathscr E_{\bm n,V}[\rho]
=\frac12\iint\rho(x)\rho(y)\mathcal L_{\bm n}(x-y)\dd x\dd y
+\int V(x)\rho(x)\dd x.
\end{equation}
We do not claim a large deviation derivation of \eqref{eq:energy} from a complete microscopic DT ensemble.  It is the variational problem naturally associated with the exact difference Hessian interaction.

A sufficiently regular stationary density satisfies
\[
\int\rho(y)\mathcal L_{\bm n}(x-y)\dd y+V(x)=\ell_0
\]
on the interior of its support.  Differentiating yields
\begin{equation}\label{eq:saddle}
\operatorname{PV}\int\rho(y)K_{\bm n}(x-y)\dd y+V'(x)=0,
\end{equation}
where
\begin{equation}\label{eq:K}
K_{\bm n}(t)=\frac12\sum_dd^3n_d\coth\frac{dt}{2}.
\end{equation}

\subsection{Cyclotomic decomposition}
For $\xi\in\mu_N$, set
\[
\acoeff_\xi=\sum_{\xi^d=1}d^2n_d,
\qquad \acoeff_1=\wt.
\]
The elementary identity
\[
\frac{z^d+1}{z^d-1}=\frac1d\sum_{\xi^d=1}\frac{z+\xi}{z-\xi}
\]
gives
\begin{equation}\label{eq:Kcycl}
K_{\bm n}(t)=\frac12\sum_{\xi\in\mu_N}\acoeff_\xi\frac{e^t+\xi}{e^t-\xi}.
\end{equation}
Moreover
\[
\frac{\acoeff_\xi}\wt=\kappa_{\bm n}(\xi)
\]
under the natural identification $\mu_N\cong C_N$.

Introduce $X=e^x$ and the resolvent
\begin{equation}\label{eq:resolvent}
\mathcal R(X)=\int\rho(y)\frac{X+e^y}{X-e^y}\dd y.
\end{equation}
Then \eqref{eq:saddle} becomes
\begin{equation}\label{eq:scalar-nonlocal}
\wt\operatorname{PV}\mathcal R(X)
+\sum_{\xi\ne1}\acoeff_\xi\mathcal R(X/\xi)
=-2V'(\log X).
\end{equation}

\begin{assumption}[Regular cyclotomic one cut regime]\label{ass:onecut}
We assume that the support of $\rho$ is a compact interval, $\rho$ has sufficient H\"older regularity for Plemelj boundary values, the rotated cuts $\Gamma_j=\omega^j\Gamma$ have disjoint interiors, and the external forcing admits compatible single-valued holomorphic continuations near these cuts.
\end{assumption}

Under \Cref{ass:onecut}, define
\[
\mathcal R_j(X)=\mathcal R(\omega^{-j}X),\qquad
\bm{\mathcal R}=(\mathcal R_0,\ldots,\mathcal R_{N-1})^T.
\]
Set
\[
B_{jm}=\frac{\acoeff_{\omega^{m-j}}}\wt.
\]
This is exactly the Gram matrix $B_{\bm n}$.

\begin{theorem}[Cyclotomic reflection Riemann-Hilbert problem]\label{thm:RHP}
Under \Cref{ass:onecut}, on the cut $\Gamma_j$ the vector resolvent satisfies
\begin{equation}\label{eq:affine-jump}
\bm{\mathcal R}_+=R_j\bm{\mathcal R}_-+q_je_j,
\qquad
R_j=I-2e_je_j^TB_{\bm n},
\end{equation}
where, with compatible branches as in \Cref{ass:onecut},
\[
q_j(X)=-\frac4\wt V'\bigl(\log(\omega^{-j}X)\bigr),
\]
is the rotated external forcing.  Moreover
\[
R_j^2=I,\qquad R_j^TB_{\bm n}R_j=B_{\bm n}.
\]
Thus the homogeneous jump representation is precisely the Householder reflection representation of \Cref{sec:rigidity}.
\end{theorem}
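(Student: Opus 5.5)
The plan is to derive the jump from the Plemelj formulas applied to the scalar nonlocal equation \eqref{eq:scalar-nonlocal}, rotated onto each cut, and then read off the homogeneous part as a matrix.

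\textbf{Step 1: boundary values of the rotated resolvents.} Under \Cref{ass:onecut}, consider a point $X\in\Gamma_j$. Only the component $\mathcal R_j(X)=\mathcal R(\omega^{-j}X)$ has its cut there, since $\omega^{-j}X\in\Gamma$. Every other $\mathcal R_m$ is holomorphic near $X$, because the rotated cuts have disjoint interiors. Plemelj then gives
\[
\mathcal R_{j,\pm}=\operatorname{PV}\mathcal R_j\pm\pi i\,\rho\cdot(\text{Jacobian factor}),
\]
and $\mathcal R_{m,+}=\mathcal R_{m,-}$ for $m\ne j$. Consequently
\[
\operatorname{PV}\mathcal R_j=\tfrac12(\mathcal R_{j,+}+\mathcal R_{j,-}).
\]

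\textbf{Step 2: the rotated saddle equation.} Substitute $X\mapsto\omega^{-j}X$ in \eqref{eq:scalar-nonlocal}. The terms $\mathcal R(\omega^{-j}X/\xi)$ with $\xi=\omega^{j-m}$, $m\ne j$, are exactly $\mathcal R_m(X)$, with coefficient $\acoeff_{\omega^{j-m}}=\wt B_{jm}$; here we use $\kappa(g^{-1})=\kappa(g)$, so $B$ is symmetric. Dividing by $\wt$ and using $B_{jj}=1$, we obtain on $\Gamma_j$
\[
\tfrac12(\mathcal R_{j,+}+\mathcal R_{j,-})+\sum_{m\ne j}B_{jm}\mathcal R_m=-\tfrac2\wt V'(\log\omega^{-j}X).
\]
The left side equals $(B\bm{\mathcal R}_-)_j+\tfrac12(\mathcal R_{j,+}-\mathcal R_{j,-})$. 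Solving for the $j$th entry gives
\[
\mathcal R_{j,+}=\mathcal R_{j,-}-2(B\bm{\mathcal R}_-)_j+q_j.
\]
The other entries are unchanged. In vector form this is $\bm{\mathcal R}_+=(I-2e_je_j^TB)\bm{\mathcal R}_-+q_je_j$, which is \eqref{eq:affine-jump}.

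\textbf{Step 3: the algebraic identities.} Using $e_j^TBe_j=B_{jj}=1$,
\[
R_j^2=I-4e_je_j^TB+4e_j(e_j^TBe_j)e_j^TB=I.
\]
For the invariance of $B$, expand and use the symmetry of $B$:
\[
R_j^TBR_j=B-2Be_je_j^TB-2Be_je_j^TB+4Be_je_j^TB=B.
\]
Thus $R_j$ is the $B$-orthogonal reflection $x\mapsto x-2\langle x,e_j\rangle_Be_j$. The basis vector $e_j$ plays the role of $v_{\omega^j}=\delta_{\omega^j}$, since the $B$-Gram matrix of the $e_j$ is $B_{\bm n}$. So $R_j$ agrees with $R_{\omega^j}$ from \eqref{eq:reflection} in the coordinates $\delta_g$; note that $B$ acting on coordinates computes $\langle\cdot,\delta_{\omega^j}\rangle$.

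\textbf{Main obstacle.} The delicate step is bookkeeping the normalization in Step 1, namely the factor $\tfrac12$ versus $2$ that turns $\operatorname{PV}$ into the jump with coefficient $2$, and the sign and index convention $m-j$ versus $j-m$. I would check these on the $N=1$ case, where the jump must reduce to $\mathcal R_+=-\mathcal R_-+q_0$, consistent with $\operatorname{PV}\mathcal R=-2V'/\wt$.
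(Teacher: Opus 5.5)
Your proposal is correct and follows the paper's proof: Plemelj averaging on the rotated cut, the observation that only the $j$th component jumps, solving the rotated scalar equation for $\mathcal R_{j,+}$, and expanding $R_j^2$ and $R_j^TBR_j$ using $B_{jj}=1$ and $B^T=B$. One small slip in Step 2: the identity $\mathcal R(\omega^{-j}X/\xi)=\mathcal R_m(X)$ requires $\xi=\omega^{m-j}$, whereas your choice $\xi=\omega^{j-m}$ gives $\mathcal R_{2j-m}(X)$. With $\xi=\omega^{m-j}$ the coefficient $\acoeff_{\omega^{m-j}}=\wt B_{jm}$ comes directly from the definition of $B_{jm}$, and the final jump formula is unaffected in any case because $\acoeff_\xi=\acoeff_{\xi^{-1}}$.
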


\begin{proof}
Plemelj gives $\operatorname{PV}\mathcal R=(\mathcal R_++\mathcal R_-)/2$ on $\Gamma$.  Apply \eqref{eq:scalar-nonlocal} on each rotated cut and collect the $N$ components.  Only the $j$th component jumps, giving \eqref{eq:affine-jump}.  Since $B_{jj}=1$, direct expansion yields $R_j^2=I$ and $R_j^TBR_j=B$.
\end{proof}

\begin{proposition}[Analytic homogenization]\label{prop:homog}
If the forcing vector $\bm q(X)$ is single-valued and holomorphic across all rotated cuts, define
\[
\bm p(X)=\frac12B_{\bm n}^{-1}\bm q(X),
\qquad
\bm Z=\bm{\mathcal R}-\bm p.
\]
Then $\bm Z_+=R_j\bm Z_-$ on $\Gamma_j$.
\end{proposition}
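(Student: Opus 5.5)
We need to prove that $\bm Z_+=R_j\bm Z_-$ on $\Gamma_j$, given the affine jump \eqref{eq:affine-jump}. The plan is to substitute $\bm{\mathcal R}=\bm Z+\bm p$ and reduce the claim to a single algebraic identity involving $R_j$ and $\bm p$.

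First, since $\bm q$ is single-valued and holomorphic across every rotated cut, $\bm p=\frac12B_{\bm n}^{-1}\bm q$ has no jump there, so $\bm p_+=\bm p_-=\bm p$ on $\Gamma_j$; invertibility of $B_{\bm n}$ comes from \Cref{prop:positive}. Substituting into \eqref{eq:affine-jump} gives
\[
\bm Z_+ +\bm p=R_j(\bm Z_-+\bm p)+q_je_j .
\]
Hence the claim is equivalent to the identity
\[
(I-R_j)\bm p=q_je_j \quad\text{on }\Gamma_j .
\]

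Next, we compute the left side using $R_j=I-2e_je_j^TB_{\bm n}$:
\[
(I-R_j)\bm p=2e_je_j^TB_{\bm n}\bm p=e_je_j^T\bm q=q_je_j ,
\]
where the middle step uses $B_{\bm n}\bm p=\frac12\bm q$. This is exactly the required identity, so $\bm Z_+=R_j\bm Z_-$.

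The main point to check is an interpretive one rather than a computational one. The scalar $q_j$ in \eqref{eq:affine-jump} must be the $j$th component of the forcing vector $\bm q$, evaluated with the same branch as in \Cref{ass:onecut}. The hypothesis that $\bm q$ is single-valued across all cuts is what guarantees this consistency and removes any jump of $\bm p$. Once this is confirmed, the argument is the two-line algebra above, and it holds simultaneously for every $j$.
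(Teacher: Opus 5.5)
Your proposal is correct and matches the paper's proof. The paper likewise uses $B_{\bm n}\bm p=\tfrac12\bm q$ to get $(I-R_j)\bm p=2e_je_j^TB_{\bm n}\bm p=q_je_j$, and then substitutes $\bm{\mathcal R}=\bm Z+\bm p$ into \eqref{eq:affine-jump}. Your explicit remarks that $\bm p$ has no jump across $\Gamma_j$ and that $B_{\bm n}$ is invertible by \Cref{prop:positive} spell out what the paper leaves implicit.
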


\begin{proof}
Since $B\bm p=\bm q/2$,
\[
(I-R_j)\bm p=2e_je_j^TB\bm p=q_je_j.
\]
Substitution into \eqref{eq:affine-jump} proves the claim.
\end{proof}

The map $u=X^N$ identifies the $N$ rotated cuts in the quotient $u$-plane; equivalently, the vector problem may be viewed as a single-cut problem with $N$ cyclic sheets.  Changing $X$ to $\omega X$ cyclically permutes the components and conjugates the reflections.  This is the sense in which the vectorized problem is cyclotomic.

\section{Folded tangent BPS realization}\label{sec:BPScompare}

We compare the homogeneous jumps of \Cref{thm:RHP} with the tangent geometry of Bridgeland's BPS automorphisms \cite{BridgelandDT}, viewed against the broader wall-crossing formalism of Kontsevich-Soibelman \cite{KontsevichSoibelman}.  The comparison is structural rather than an identification of our auxiliary BPS model with the genuine DT BPS structure of a flop.

\subsection{Tangent form of an uncoupled BPS automorphism}
With Bridgeland's convention, the ray automorphism is
\begin{equation}\label{eq:Bridgeland}
\mathbb S(\ell)^*(x_\beta)
=x_\beta\prod_{Z(\gamma)\in\ell}
(1-x_\gamma)^{\Omega(\gamma)\langle\beta,\gamma\rangle}.
\end{equation}
In local logarithmic coordinates $y_\alpha=\log x_\alpha$,
\[
\delta y_\beta'
=\delta y_\beta
-\sum_\gamma\Omega(\gamma)\langle\beta,\gamma\rangle
\frac{x_\gamma}{1-x_\gamma}\,\delta y_\gamma.
\]

\begin{proposition}[Unipotent tangent form]\label{prop:unipotent}
For a finite uncoupled BPS ray, $D\mathbb S(\ell)=I+N_\ell$ with $N_\ell^2=0$.  In particular $\det D\mathbb S(\ell)=1$.
\end{proposition}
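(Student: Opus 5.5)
We read off the tangent map of \eqref{eq:Bridgeland} directly from the logarithmic differential recorded above. Write $D\mathbb S(\ell)=I+N_\ell$, where $N_\ell$ is the linear map
\[
N_\ell(\delta y)_\beta=-\sum_{\gamma}\Omega(\gamma)\langle\beta,\gamma\rangle\frac{x_\gamma}{1-x_\gamma}\,\delta y_\gamma ,
\]
and the sum runs over the finitely many active classes $\gamma$ with $Z(\gamma)\in\ell$ and $\Omega(\gamma)\neq0$. The claim then reduces to showing $N_\ell^2=0$. The determinant statement follows at once, since a nilpotent endomorphism has only zero eigenvalues and $\det(I+N_\ell)=1$.

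The key input is the \emph{uncoupled} hypothesis. For a ray, this means that any two active classes $\gamma,\gamma'$ on $\ell$ satisfy $\langle\gamma,\gamma'\rangle=0$. Indeed, all charges on a ray have central charges of the same phase, and uncoupled means exactly that these classes pair trivially.

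To see $N_\ell^2=0$, factor $N_\ell=A\circ\Pi$. Here $\Pi(\delta y)=(c_\gamma\,\delta y_\gamma)_\gamma$ projects onto the coordinates indexed by active classes, with $c_\gamma=\Omega(\gamma)x_\gamma/(1-x_\gamma)$. The map $A$ sends $(u_\gamma)_\gamma$ to the vector with $\beta$-component $-\sum_\gamma\langle\beta,\gamma\rangle u_\gamma$. Then $N_\ell^2=A\,\Pi A\,\Pi$, so it suffices to compute the composite $\Pi A$. Its $(\gamma',\gamma)$ entry is $-c_{\gamma'}\langle\gamma',\gamma\rangle$, which vanishes by the uncoupled condition. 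Hence $\Pi A=0$ and therefore $N_\ell^2=0$.

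One bookkeeping point needs care. The coordinates $y_\alpha$ are indexed by a basis of the lattice, while the classes $\gamma$ are lattice elements, so $\delta y_\gamma$ should be read as the linear functional $\sum_i\gamma_i\,\delta y_{e_i}$, and similarly $\langle\beta,\gamma\rangle$ is bilinear in $\beta$. With this reading, $N_\ell$ is a finite sum of rank-one maps $\delta y\mapsto -c_\gamma\,\gamma(\delta y)\,\langle\cdot,\gamma\rangle$. The composite of the $\gamma$-term after the $\gamma'$-term carries the factor $\langle\gamma',\gamma\rangle=0$ (this includes $\gamma=\gamma'$ by skew-symmetry). Hence every pairwise composite vanishes, giving $N_\ell^2=0$ in lattice-basis form as well.

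I expect the main obstacle to be only this indexing: making precise that the pairing of $\gamma'$ against $\gamma$, rather than some basis pairing, is what appears in the composite. Finiteness of the ray guarantees that $N_\ell$ is a well-defined finite-rank map at points where $x_\gamma\neq1$.
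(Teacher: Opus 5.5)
Your proof is correct and follows the paper's own argument. You write $D\mathbb S(\ell)=I+N_\ell$ from the logarithmic differential and show $N_\ell^2=0$ because every composite term carries a factor $\langle\gamma,\gamma'\rangle$ between active classes, which vanishes by the uncoupled hypothesis. Your factorization $N_\ell=A\circ\Pi$, and your remark that $\delta y_\gamma$ must be read linearly in $\gamma$, simply make explicit the bookkeeping that the paper's double-sum computation leaves implicit.
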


\begin{proof}
Write
\[
(N_\ell\,\delta y)_\beta
=-\sum_{\gamma\in\Gamma_\ell}
c_\gamma\,\langle\beta,\gamma\rangle\,\delta y_\gamma,
\qquad
c_\gamma=\Omega(\gamma)\frac{x_\gamma}{1-x_\gamma}.
\]
For an uncoupled ray, $\langle\gamma,\gamma'\rangle=0$ for all active classes $\gamma,\gamma'$.  Hence
\[
(N_\ell^2\,\delta y)_\beta
=\sum_{\gamma,\gamma'}
c_\gamma c_{\gamma'}
\langle\beta,\gamma\rangle
\langle\gamma,\gamma'\rangle
\delta y_{\gamma'}=0.
\]
Thus $N_\ell^2=0$, so every eigenvalue of $I+N_\ell$ is $1$ and the determinant is $1$.
\end{proof}

\begin{theorem}[Direct linearization obstruction]\label{thm:obstruction}
No cyclotomic reflection $R_j$ is linearly conjugate to the ordinary tangent map of a finite uncoupled Bridgeland BPS ray automorphism.
\end{theorem}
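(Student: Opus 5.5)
The argument compares determinants, since determinant is a conjugacy invariant. Suppose, for contradiction, that some cyclotomic reflection $R_j$ is linearly conjugate to the ordinary tangent map $D\mathbb S(\ell)$ of a finite uncoupled BPS ray. That is, there is an invertible linear map $P$ with $P R_j P^{-1}=D\mathbb S(\ell)$. Here we identify $V_{\bm n}$, or $\C^N$ after complexifying if needed, with the tangent space of the log-coordinate chart; a linear conjugacy already forces these two spaces to have the same dimension. Then
\[
\det R_j=\det D\mathbb S(\ell).
\]

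Next we compute both sides. By \Cref{thm:RHP}, $R_j=I-2e_je_j^TB_{\bm n}$ is a rank one perturbation of the identity. The matrix determinant lemma therefore gives
\[
\det R_j=1-2e_j^TB_{\bm n}e_j=1-2B_{jj}=1-2=-1,
\]
using $B_{jj}=\kappa_{\bm n}(e)=1$. A more conceptual route gives the same value: $R_j$ is the Householder reflection across the $B_{\bm n}$-orthogonal complement of the unit vector $e_j$. It fixes a hyperplane pointwise and acts by $-1$ on the line spanned by $e_j$, so its eigenvalues are $1$ with multiplicity $N-1$ and $-1$ once. On the other side, \Cref{prop:unipotent} gives $D\mathbb S(\ell)=I+N_\ell$ with $N_\ell^2=0$, so $\det D\mathbb S(\ell)=1$. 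Since $-1\neq 1$ over any field of characteristic $\ne2$, in particular over $\R$ or $\C$, we have a contradiction.

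The conclusion can be strengthened by comparing spectra rather than only determinants. The reflection $R_j$ has eigenvalue $-1$, whereas every eigenvalue of a unipotent map is $1$. So no conjugacy is possible, even over $\C$ and even allowing $P$ to be an arbitrary invertible linear map between spaces of equal dimension.

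The only step that needs care is making precise what \emph{ordinary tangent map} means: the map must be taken at a point where $x_\gamma\ne1$ for the active classes, so that $D\mathbb S(\ell)$ is defined. I would state that restriction explicitly. Beyond that I expect no real obstacle: the whole content is that $\det=-1$ for the reflection versus $\det=+1$ for the unipotent tangent map, and I would also note that the case $N=1$, where $R_j=-1$, is covered by the same computation.
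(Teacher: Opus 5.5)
Your proposal is correct and follows the paper's own argument: determinant is a conjugacy invariant, $\det D\mathbb S(\ell)=1$ by \Cref{prop:unipotent}, and $\det R_j=-1$. Your explicit computation $\det R_j=1-2B_{jj}=-1$ via the matrix determinant lemma, and your eigenvalue refinement ($R_j$ has eigenvalue $-1$, while a unipotent map has only eigenvalue $1$), just spell out what the paper states in one line.
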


\begin{proof}
By \Cref{prop:unipotent}, the tangent map has determinant $+1$.  A genuine reflection has determinant $-1$.  Determinant is invariant under linear conjugacy.
\end{proof}

\subsection{Shear-fold factorization}
Let $b_j^T=e_j^TB$ and $c_j^T=b_j^T-e_j^T$.  Since $B_{jj}=1$, $c_j^Te_j=0$.  Define
\begin{equation}\label{eq:DjUj}
D_j=I-2e_je_j^T,
\qquad
U_j=I-2e_jc_j^T.
\end{equation}
Then $(U_j-I)^2=0$, $\det U_j=1$, and $D_j^2=I$.

\begin{proposition}[Shear-fold factorization]\label{prop:factor}
For every $j$,
\[
R_j=U_jD_j,
\qquad
D_jU_jD_j=U_j^{-1}.
\]
Consequently $(U_jD_j)^2=I$.
\end{proposition}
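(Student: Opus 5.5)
The plan is a direct matrix computation using only the rank one structure of the factors and the normalization $B_{jj}=1$. Write $D_j=I-2e_je_j^T$ and $U_j=I-2e_jc_j^T$, where $c_j^T=e_j^TB-e_j^T$. Since $B_{jj}=1$, we have $c_j^Te_j=0$. This orthogonality is the only identity needed besides $e_j^Te_j=1$.

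First I will expand the product:
\[
U_jD_j=(I-2e_jc_j^T)(I-2e_je_j^T)=I-2e_je_j^T-2e_jc_j^T+4e_j(c_j^Te_j)e_j^T .
\]
The last term vanishes because $c_j^Te_j=0$. We are left with
\[
U_jD_j=I-2e_j(e_j^T+c_j^T)=I-2e_je_j^TB=R_j .
\]
This is exactly the reflection of \Cref{thm:RHP}.

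Next I will compute the conjugation. Since $D_je_j=-e_j$, we get $D_je_jc_j^TD_j=-e_jc_j^TD_j$. Moreover
\[
c_j^TD_j=c_j^T-2(c_j^Te_j)e_j^T=c_j^T .
\]
Hence $D_jU_jD_j=I+2e_jc_j^T$. On the other hand, $(e_jc_j^T)^2=e_j(c_j^Te_j)c_j^T=0$, so $U_j^{-1}=I+2e_jc_j^T$. Together these give $D_jU_jD_j=U_j^{-1}$.

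Finally, using $D_j^2=I$,
\[
(U_jD_j)^2=U_j(D_jU_jD_j)=U_jU_j^{-1}=I .
\]
This agrees with $R_j^2=I$ from \Cref{thm:RHP}. There is no real obstacle here. The only point to track carefully is that every cross term vanishes for the single reason $c_j^Te_j=1-B_{jj}=0$, which holds because the orbit vectors are unit vectors.
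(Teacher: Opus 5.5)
Your proof is correct and matches the paper's argument: you expand $U_jD_j$ using $c_j^Te_j=0$, then use $D_je_j=-e_j$ and $c_j^TD_j=c_j^T$ to get $D_jU_jD_j=I+2e_jc_j^T=U_j^{-1}$, and you also spell out the nilpotency check $(e_jc_j^T)^2=0$ and the final step $(U_jD_j)^2=U_j(D_jU_jD_j)=I$, which the paper leaves implicit. Two cosmetic slips: $c_j^Te_j=B_{jj}-1$ rather than $1-B_{jj}$ (it is zero either way), and that final step does not actually use $D_j^2=I$.
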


\begin{proof}
Using $c_j^Te_j=0$,
\[
U_jD_j=(I-2e_jc_j^T)(I-2e_je_j^T)
=I-2e_j(e_j^T+c_j^T)=I-2e_jb_j^T=R_j.
\]
Since $D_je_j=-e_j$ and $c_j^TD_j=c_j^T$ (because $c_j^Te_j=0$),
\[
D_jU_jD_j=I+2e_jc_j^T=U_j^{-1}.
\]
\end{proof}

We call $D_j$ the \emph{cyclotomic folding involution}.  It is an involution of the tangent model; we do not claim it is itself a Poisson automorphism of the auxiliary BPS lattice.

\subsection{Auxiliary BPS realization of the shear}
Define the integral coefficients
\begin{equation}\label{eq:Ajm}
\acoeff_{jm}=\wt B_{jm}
=\sum_{(\omega^{j-m})^d=1}d^2n_d\in\Z_{\ge0}.
\end{equation}
Fix $j$ and introduce the lattice
\[
\Gamma_j=\Z\beta_j\oplus\bigoplus_{m\ne j}\Z\gamma_m
\]
with skew form
\[
\langle\beta_j,\gamma_m\rangle=1,
\qquad
\langle\gamma_m,\gamma_r\rangle=0.
\]
Choose central charges so that all active $\gamma_m$ lie on one ray and set
\[
\Omega_j(\gamma_m)=\Omega_j(-\gamma_m)=\acoeff_{jm}.
\]
This is a finite integral uncoupled BPS structure.  Set
\begin{equation}\label{eq:qstar}
q_*=\frac2{\wt+2},
\qquad
\frac{q_*}{1-q_*}=\frac2\wt.
\end{equation}
Choose a regular twisted torus point $\xi_*$ with $x_{\gamma_m}(\xi_*)=q_*$ for all $m\ne j$.  Such a point exists: the active classes $\gamma_m$ span an isotropic direct summand, so their twisted multiplication has no sign obstruction and the values $x_{\gamma_m}=q_*$ may be prescribed simultaneously; one then chooses the remaining coordinate $x_{\beta_j}$ arbitrarily away from the singular locus.  Since the active set is finite, the support property of the auxiliary BPS structure is automatic.

\begin{theorem}[Folded tangent BPS realization]\label{thm:folded}
For the auxiliary BPS structure above,
\[
D_{\xi_*}\mathbb S_j=U_j.
\]
Hence
\[
R_j=(D_{\xi_*}\mathbb S_j)D_j.
\]
\end{theorem}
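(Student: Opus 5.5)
The plan is to compute the tangent map of $\mathbb S_j:=\mathbb S(\ell)$ at $\xi_*$ directly from the logarithmic linearization displayed after \eqref{eq:Bridgeland}, and then compare it entry by entry with $U_j=I-2e_jc_j^T$ from \eqref{eq:DjUj}.

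\textbf{Coordinates and target matrix.} I will use the logarithmic coordinates $y_{\beta_j}$ and $y_{\gamma_m}$ ($m\ne j$) on the twisted torus of $\Gamma_j$. These are identified with the coordinate vectors of $V_{\bm n}\cong\R^N$ by $y_{\beta_j}\leftrightarrow e_j$ and $y_{\gamma_m}\leftrightarrow e_m$. The $N$ basis classes of $\Gamma_j$ match the $N$ indices, so this is a legitimate linear identification. By definition $c_j^T=e_j^TB-e_j^T$, so $(c_j)_j=0$ and $(c_j)_m=B_{jm}$ for $m\ne j$. Hence $U_j$ fixes every coordinate except the $j$-th, and
\[
(U_j\,\delta y)_j=\delta y_j-2\sum_{m\ne j}B_{jm}\,\delta y_m.
\]

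\textbf{Rows $\beta=\gamma_r$.} In the linearized formula the coefficient of $\delta y_\gamma$ involves $\langle\gamma_r,\gamma\rangle$ for active classes $\gamma$ on $\ell$. All of these are multiples of the $\gamma_m$, and $\langle\gamma_r,\gamma_m\rangle=0$. So these rows of $D_{\xi_*}\mathbb S_j$ are rows of the identity, exactly as for $U_j$. This is the same isotropy used in \Cref{prop:unipotent}.

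\textbf{Row $\beta=\beta_j$.} The active classes on $\ell$ are the $\gamma_m$ with $\Omega_j(\gamma_m)=\acoeff_{jm}$. The classes $-\gamma_m$ have central charge $-Z(\gamma_m)$, so they lie on the opposite ray and do not enter $\mathbb S(\ell)$. Using $\langle\beta_j,\gamma_m\rangle=1$ and $x_{\gamma_m}(\xi_*)=q_*$, with $q_*/(1-q_*)=2/\wt$ from \eqref{eq:qstar}, the formula gives
\[
\delta y'_{\beta_j}=\delta y_{\beta_j}-\sum_{m\ne j}\acoeff_{jm}\frac{2}{\wt}\,\delta y_{\gamma_m}
=\delta y_{\beta_j}-2\sum_{m\ne j}B_{jm}\,\delta y_{\gamma_m},
\]
by \eqref{eq:Ajm}. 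This is exactly the $j$-th row of $U_j$. Therefore $D_{\xi_*}\mathbb S_j=U_j$, and \Cref{prop:factor} then gives $R_j=U_jD_j=(D_{\xi_*}\mathbb S_j)D_j$.

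\textbf{Main obstacle.} The delicate step is bookkeeping of conventions rather than any hard estimate. Three things must be checked:
\begin{itemize}
\item which of $\pm\gamma_m$ lies on the chosen ray $\ell$;
\item the sign of the skew pairing in \eqref{eq:Bridgeland};
\item the twisted-torus sign $x_{\gamma}$ versus $x_{-\gamma}=x_\gamma^{-1}$.
\end{itemize}
Any of these could flip the sign of the shear, or replace $q_*/(1-q_*)$ by $-1/(1-q_*)$. I would first fix $\ell$ to be the ray through the $Z(\gamma_m)$ and verify directly that the coefficient is $+2B_{jm}$ with a minus sign in front. If the convention instead produced $U_j^{-1}$, I would swap to the opposite ray containing $-\gamma_m$, which the symmetric choice $\Omega_j(-\gamma_m)=\Omega_j(\gamma_m)$ permits. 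I would also confirm that $\xi_*$ avoids the singular locus $x_{\gamma_m}=1$; this holds because $q_*\in(0,1)$.
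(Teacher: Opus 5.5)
Your proposal is correct and is the paper's proof: the $\beta_j$-row of the logarithmic differential has coefficient $-\acoeff_{jm}\,q_*/(1-q_*)=-2B_{jm}$, the $\gamma_r$-rows are identity rows because the $\gamma_m$ are mutually orthogonal, and \Cref{prop:factor} then gives $R_j=U_jD_j$. One small caveat about your fallback: switching to the opposite ray would not just invert the shear, because there $x_{-\gamma_m}=1/q_*$ and the coefficient becomes $+\acoeff_{jm}/(1-q_*)$ rather than $2B_{jm}$; this does not matter, since the stated convention already gives $U_j$ directly.
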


\begin{proof}
Apply the logarithmic differential of \eqref{eq:Bridgeland} to the coordinate $y_{\beta_j}$.  Using \eqref{eq:qstar}, the coefficient of $\delta y_{\gamma_m}$ is
\[
-\Omega_j(\gamma_m)\frac{q_*}{1-q_*}
=-\acoeff_{jm}\frac2\wt=-2B_{jm}.
\]
The active $\gamma_m$ coordinates are fixed because the active classes are mutually orthogonal.  Thus the tangent matrix is exactly $U_j$.  Apply \Cref{prop:factor}.
\end{proof}

\begin{remark}[Scope and choices]
The matrix $U_j$ and the multiplicities $\acoeff_{jm}$ are canonically determined by the cyclotomic data.  The auxiliary BPS realization of that matrix still involves harmless choices of central charges and of a regular torus point with the prescribed active coordinates.  The auxiliary BPS multiplicities are the cyclotomic divisibility widths, not the original geometric DT spectrum.  Therefore \Cref{thm:folded} is a universal tangent realization of the reflection system and not an equivalence between the genuine DT Riemann-Hilbert problem of a flop and the cyclotomic problem of \Cref{sec:RHP}.  Bridgeland's genuine resolved-conifold problem is treated in \cite{BridgelandConifold}.
\end{remark}

\subsection{The rank two model}
When $N=2$,
\[
B=\begin{pmatrix}1&\kappa\\ \kappa&1\end{pmatrix},
\qquad
U_1=\begin{pmatrix}1&0\\-2\kappa&1\end{pmatrix},
\qquad
D_1=\begin{pmatrix}1&0\\0&-1\end{pmatrix},
\]
so
\[
R_1=\begin{pmatrix}1&0\\-2\kappa&-1\end{pmatrix}.
\]
For the finite spectrum $(n_1,n_2)=(4,1)$, one has $\wt=8$, $\kappa=1/2$, and $q_*=1/5$.  For $(5,1)$ one has $\wt=9$, $\kappa=4/9$, and $q_*=2/11$.  In both cases the tangent factor remains unipotent; the finite/dense transition is produced after folding.

\section*{Discussion}
For a split semisimple algebra, the metric shadow together with the total dimension recovers the complete Wedderburn type by \Cref{thm:recover-semisimple}.  For a nonsemisimple contraction algebra, however, the construction retains only the Gopakumar-Vafa visible part of the information.  It therefore should not be interpreted as a classification of flops.  Its value lies instead in producing, from local enumerative data, a canonical positive metric and a rigid reflection representation with two independent realizations: one through contraction algebra width data and one through an associated singular integral/Riemann--Hilbert model.

Several questions remain open.  First, it would be interesting to derive the variational functional of \Cref{sec:RHP} from a microscopic DT or random partition ensemble by a genuine large deviation argument, in the spirit of Nekrasov-Okounkov \cite{NekrasovOkounkov}.  Second, one may ask whether the folded tangent comparison of \Cref{sec:BPScompare} can be related more directly to the genuine Bridgeland DT BPS structure of a flop.  Third, in higher cyclotomic rank, the closure of an infinite reflection group inside $O(B_{\bm n})$ deserves further study; unlike the rank two case, no general density statement is claimed here.

\section*{Declaration of competing interest}
The author declares that he has no known competing financial interests or personal relationships that could have appeared to influence the work reported in this paper.

\section*{Data availability}
No data was used for the research described in this article.

\end{document}